\newtheorem{thm}{Theorem}
\newtheorem{prop}[thm]{Proposition}
\newtheorem{cor}[thm]{Corollary}
\newtheorem{notation}[thm]{Notations}
\begin{document}

\author{Liana David}

\title{A prolongation of the conformal-Killing operator
on quaternionic-K\"{a}hler manifolds}

\maketitle

\textbf{Abstract:} A $2$-form on a quaternionic-K\"{a}hler
manifold $(M, g)$ of dimension $4n\geq 8$ is called compatible
(with the quaternionic structure) if it is a section of the direct
sum bundle $S^{2}H\oplus S^{2}E.$ We construct a prolongation
$\mathcal D$ of the conformal-Killing operator acting on
compatible $2$-forms. We show that $\mathcal D$ is flat if and
only if the quaternionic-Weyl tensor of $(M, g)$ vanishes.
Consequences of this result are developed. We construct a
skew-symmetric multiplication on the space of conformal-Killing
$2$-forms on $(M, g)$ and we study its properties in connection
with the subspace of compatible conformal-Killing $2$-forms.\\

{\it Mathematics Subject Classification:} 53C26, 53A30.\\

{\it Key words:} Conformal-Killing $2$-forms,
quaternionic-K\"{a}hler manifolds, prolongations of differential
operators.\\

{\it Author's address:} Institute of Mathematics "Simion Stoilow"
of the Romanian Academy; Calea Grivitei nr 21, Bucharest, Romania;
liana.david@imar.ro; liana.r.david@gmail.com; tel.
0040-21-3196506; fax 0040-21-3196505.

\section{Introduction}

Let $(M^{m}, g)$ be a Riemannian $m$-dimensional manifold. For any
$1\leq p\leq m$ consider the tensor product bundle $T^{*}M\otimes
\Lambda^{p}(M)$ and its irreducible $O(m)$-decomposition:
\begin{equation}\label{tm}
T^{*}M\otimes \Lambda^{p}(M) = \Lambda^{p+1}(M)\oplus
\Lambda^{p-1}(M)\oplus {\mathcal T}^{p,1}(M),
\end{equation}
where the sub-bundle ${\mathcal T}^{p,1}(M)$ of $T^{*}M\otimes
\Lambda^{p}(M)$ is the intersection of the kernels of the wedge
product and inner contraction maps. If $\psi\in\Omega^{p}(M)$ is a
$p$-form, the covariant derivative $\nabla \psi$ with respect to
the Levi-Civita connection $\nabla$ of $g$ is a section of
$T^{*}M\otimes\Lambda^{p}(M)$ and its projection onto
$\Lambda^{p+1}(M)$ and $\Lambda^{p-1}(M)$ is given, essentially,
by the exterior derivative $d\psi$ and the codifferential $\delta
\psi$, respectively. The $p$-form $\psi$ is called
conformal-Killing if the projection of $\nabla\psi$ onto the third
component in the decomposition (\ref{tm}) is trivial;
equivalently, if the conformal-Killing equation
\begin{equation}\label{conformalkil}
\nabla_{Y} \psi  =\frac{1}{p+1} i_{Y}d\psi  -\frac{1}{m-p+1}Y\land
\delta \psi ,\quad\forall Y\in TM
\end{equation}
is satisfied. (Above and often in this note we identify, without
mentioning explicitly, tangent vectors with $1$-forms by means of
the Riemannian duality). A co-closed conformal-Killing form is
called Killing. There is a large literature on conformal-Killing
forms (for a survey, see e.g. \cite{sem}). A $1$-form is
(conformal)-Killing if the dual vector field is a
(conformal)-Killing vector field. Conformal-Killing forms exist on
spaces of constant curvature, on Sasaki manifolds, on some classes
of K\"{a}hler manifolds (like Bochner-flat or conformally-Einstein
K\"{a}hler manifolds) and on Riemannian manifolds which admit
Killing spinors \cite{sem}. On a compact quaternionic-K\"{a}hler
manifold of dimension at least eight, any Killing $p$-form ($p\geq
2$) is parallel (see \cite{quatk}) and any conformal-Killing
$2$-form parallel unless the quaternionic-K\"{a}hler manifold is
isometric to the standard quaternionic projective space, in which
case the codifferential defines an isomorphism from the space of
conformal-Killing $2$-forms to the space of Killing vector fields
(see \cite{lianamax}).

This paper is concerned with conformal-Killing $2$-forms on (not
necessarily compact) quaternionic-K\"{a}hler manifolds. In Section
\ref{0}, devoted to fix notations, we recall basic facts from
quaternionic-K\"{a}hler geometry.

Section \ref{2} contains the main result of our paper (see Theorem
\ref{main2}). A $2$-form on a quaternionic-K\"{a}hler manifold
$(M, g)$ (always assumed to be connected and of dimension $4n\geq
8$) is called compatible (with the quaternionic structure) if it
is a section of the direct sum bundle $S^{2}H\oplus S^{2}E$. We
find a connection $\mathcal D$ on the bundle $S^{2}H\oplus
S^{2}E\oplus TM$, which is a prolongation of the conformal-Killing
operator acting on compatible $2$-forms (i.e. the space of
$\mathcal D$-parallel sections is isomorphic to the space
${\mathcal C}_{2}(M)$ of compatible conformal-Killing $2$-forms on
$(M,g)$). We compute the curvature of $\mathcal D$ and we show
that $\mathcal D$ is flat if and only if the quaternionic-Weyl
tensor $W^{Q}$ of $(M, g)$ is zero.

Section \ref{applications} is devoted to applications of Theorem
\ref{main2}. First, we prove that the dimension of the vector
space ${\mathcal C}_{2}(M)$ is less or equal to $(n+1)(2n+3)$ and
equality holds on the standard quaternionic projective space
$\mathbb{H}P^{n}$ (see Corollary \ref{dim}). Next, we assume that
the quaternionic-K\"{a}hler manifold $(M, g)$ has non-zero scalar
curvature and, under this additional assumption, we prove that if
$(M, g)$ admits a non-parallel compatible conformal-Killing
$2$-form then the holonomy group of $(M, g)$ coincides with
$\mathrm{Sp}(1)\mathrm{Sp}(n)$ (see Proposition \ref{sp}). In
particular, there are no non-parallel compatible conformal-Killing
$2$-forms on open subsets of Wolf spaces non-isomorphic to the
standard quaternionic projective space. At the end of this section
we prove that a compatible conformal-Killing $2$-form $\psi$ on a
quaternionic-K\"{a}hler manifold $(M,g)$ of non-zero scalar
curvature is determined by its $S^{2}E$-part $\psi^{S^{2}E}$ and
we find the image and the inverse of the map ${\mathcal
C}_{2}(M)\ni\psi \rightarrow \psi^{S^{2}E}$ (see Proposition
\ref{hamkil}).

Finally, in Section \ref{bracket-section} we construct a
skew-symmetric multiplication $[\cdot , \cdot ]$ on the space of
conformal-Killing $2$-forms on a quaternionic-K\"{a}hler manifold
$(M, g)$, which preserves the subspace ${\mathcal C}_{2}(M)$ of
compatible conformal-Killing $2$-forms. When the scalar curvature
of $(M, g)$ is non-zero, $({\mathcal C}_{2}(M), [\cdot , \cdot ])$
is a Lie algebra and the codifferential $\delta : {\mathcal
C}_{2}(M)\rightarrow \mathrm{isom}(M,g)$ is a Lie algebra
homomorphism (see Corollary \ref{particular}).

\section{Quaternionic-K\"{a}hler manifolds}\label{0}

In this Section we recall basic definitions and results from
quaternionic-K\"{a}hler geometry, which will be useful in our
treatment of conformal-Killing forms.

\subsection{Basic definitions}

A quaternionic-K\"{a}hler manifold is a Riemannian manifold $(M,
g)$ of dimension $4n\geq 8$ with holonomy group included in
$\mathrm{Sp}(1)\mathrm{Sp}(n).$ Equivalently, there is a rank
three vector sub-bundle $Q\subset\mathrm{End}(TM)$ preserved by
the Levi-Civita connection $\nabla$ of $g$ and locally generated
by three anti-commuting almost complex structures $\{ J_{1},
J_{2}, J_{3}\}$ with $J_{3}= J_{1}J_{2}.$ Such a system of almost
complex structures is usually called a local admissible basis of
$Q$. The metric $g$ is Einstein; moreover, $g$ is Ricci-flat if
and only if $(M, g)$ is locally hyper-K\"{a}hler, i.e. in a
neighborhood of any point there is an admissible basis of $Q$
formed by $\nabla$-parallel complex structures.

The curvature of $g$ has the expression
\begin{equation}\label{rg} R^{g}_{X,
Y}=-\frac{\nu}{4}\left( X\land Y +\sum_{i=1}^{3}J_{i}X\land J_{i}Y
+2\sum_{i=1}^{3}\omega_{i}(X, Y)\omega_{i}\right) +W^{Q}_{X, Y}
\end{equation}
where $\nu :=\frac{k}{4n(n+2)}$ is the reduced scalar curvature
($k$ being the usual scalar curvature), $\{ J_{1}, J_{2}, J_{3}\}$
is a local admissible basis of $Q$, with K\"{a}hler forms
$\omega_{i}:= g(J_{i}\cdot , \cdot )$, and $W^{Q}$ is the
quaternionic-Weyl tensor, which satisfies
$$
W^{Q}_{JX, JY}= W^{Q}_{X, Y},\quad\forall X, Y\in TM,\quad\forall
J\in Q, \quad J^{2}=-\mathrm{Id}
$$
and belongs to the kernel of the Ricci contraction, i.e.
$$
\sum_{k=1}^{4n}W^{Q}_{X, e_{k}}(e_{k}) =0,\quad\forall X\in TM
$$
where  $\{ e_{k}\}$ is a local orthonormal frame of $TM$  (above
"$\mathrm{Id}$" denotes the identity endomorphism).\\

Let $E$ and $H$ be the (locally defined) complex vector bundles
over $M$, of rank $2n$ and $2$ respectively, associated to the
standard representations of $\mathrm{Sp}(n)$ and $\mathrm{Sp}(1)$
on $\mathbb{C}^{2n}$ and $\mathbb{C}^{2}$. The bundles $E$ and $H$
play the role of spin bundles in conformal geometry, since
\begin{equation}\label{eh}
T_{\mathbb{C}}M = E\otimes H.
\end{equation}
Let $\omega_{E}\in
\Lambda^{2}(E^{*})$ and $\omega_{H}\in \Lambda^{2} (H^{*})$
be the complex symplectic forms on $E$ and $H$,
defined by the standard $\mathrm{Sp}(n)$-invariant
symplectic form of $\mathbb{C}^{2n}$, and, respectively, the standard
$\mathrm{Sp}(1)$-invariant symplectic form of $\mathbb{C}^{2}$.
We shall identify $E$ with $E^{*}$ by means of the isomorphism
$E\ni e\rightarrow \omega_{E}(e,\cdot )$, and, similarly,
$H$ with $H^{*}$ using $\omega_{H}.$ At the level of $2$-forms,
(\ref{eh}) induces
a decomposition
\begin{equation}\label{lambda2}
\Lambda^{2}(T_{\mathbb{C}}M) = S^{2}H\oplus S^{2}E\oplus
S^{2}H\otimes \Lambda^{2}_{0}E,
\end{equation}
where $\Lambda^{2}_{0}E\subset\Lambda^{2}E$ is the kernel of the
natural contraction with $\omega_{E}.$  The bundles $S^{2}H$ and
$S^{2}E$ are the complexifications of the bundle $Q$ and,
respectively, of the bundle of $Q$-Hermitian $2$-forms (a $2$-form
is $Q$-Hermitian if it is of type $(1,1)$ with respect to any
compatible almost complex structure; an almost complex structure
is compatible if it is a section of the bundle $Q$). We shall
often identify, implicitly,  real bundles with their
complexifications. The projections of a $2$-form
$\psi\in\Omega^{2}(M)$ on $S^{2}H$ and $S^{2}E$ have the following
expressions:
\begin{equation}\label{ad-s2}
\psi^{S^{2}H}=\frac{1}{4n}\sum_{i,k}\psi (e_{k},
J_{i}e_{k})\omega_{i}
\end{equation}
\begin{equation}
\psi^{S^{2}{E}}= \frac{1}{4}\left( \psi +\sum_{i=1}^{3} \psi
(J_{i}\cdot ,J_{i}\cdot ) \right) ,
\end{equation}
where $\{ J_{1}, J_{2}, J_{3}\}$ is an admissible basis of $Q$,
with associated K\"{a}hler forms $\omega_{i}$, and $\{ e_{k}\}$ is
a local orthonormal frame of $TM.$

\subsection{The Penrose operator}\label{sectiunepen}

Penrose (or twistor) operators appear in the literature on several
classes of manifolds. On a conformal $4$-manifold a Weyl
connection defines a Penrose operator and Penrose operators
obtained in this way were studied in \cite{paul}, \cite{gaud3}.
Similarly, any quaternionic connection on a quaternionic manifold
defines a Penrose operator (see \cite{liana}). An important role
in our treatment of compatible conformal-Killing $2$-forms will be
played by the Penrose operator on a quaternionic-K\"{a}hler
manifold $(M, g)$, defined using the Levi-Civita connection (see
\cite{sal}, also \cite{amp}). It is a first order differential
operator
$$
\bar{D}:\Gamma (S^{2}H)\rightarrow \Gamma (S^{3}H\otimes E)
$$
whose value on a section $\sigma$ of $S^{2}H$ is obtained by
taking the covariant derivative $\nabla\sigma$ (where $\nabla$ is
the Levi-Civita connection), which is a section of $T^{*}_{\mathbb
C}M\otimes S^{2}H$ and projecting it onto the second component of
the irreducible decomposition
\begin{equation}\label{decomposition}
T^{*}_{\mathbb C}M\otimes S^{2}H\cong E\otimes H\oplus  E\otimes S^{3}H.
\end{equation}
Let $\{J_{1}, J_{2}, J_{3}\}$ be a local admissible
basis of $Q$, with K\"{a}hler forms $\omega_{1}$, $\omega_{2}$ and
$\omega_{3}$ respectively. According
to Section 5 of \cite{amp}, for any section $\sigma$ of $Q$,
\begin{equation}\label{sigma}
\bar{D}\sigma = \nabla\sigma +\frac{1}{3}\sum_{i=1}^{3} (\delta
\sigma )\circ J_{i}\otimes\omega_{i}.
\end{equation}
The (real) solutions of the twistor equation (i.e. sections
of the quaternionic bundle $Q$ which belong to the kernel of
$\bar{D}$) can be easily described when the reduced scalar
curvature $\nu \neq 0$. In this case the map
\begin{equation}\label{bijectiva}
\mathrm{isom}(M, g)\ni X\rightarrow \frac{2}{3\nu} (\nabla
X)^{S^{2}H}\in \mathrm{Ker}\bar{D}
\end{equation}
is an isomorphism, with inverse the codifferential (see
\cite{amp},  \cite{sal}).

\subsection{Killing and quaternionic vector fields}\label{adaugat}

There is a useful criterion to check when a vector field on a
quaternionic-K\"{a}hler manifold of non-zero scalar curvature is
Killing (see \cite{brambu}). We shall use this criterion in
Section \ref{applications}. It is stated as follows.

Recall first that a vector field, say $X$, on a
quaternionic-K\"{a}hler manifold $(M^{4n}, g)$ (with $4n\geq 8$)
is quaternionic, if its flow preserves the quaternionic bundle
$Q$, or, equivalently, $[\nabla X, Q]\subset Q$. The criterion
states that a vector field which is quaternionic and
divergence-free is necessarily Killing. This is a consequence the
theory developed in \cite{brambu} and \cite{brambu2}. For
completeness of our exposition, we include the argument, which
goes as follows. Let $X$ be a quaternionic vector field on $(M,
g)$. From \cite{brambu}, page 303, and \cite{brambu2},
$L_{X}\nabla = S^{\alpha},$ where $\alpha\in\Omega^{1}(M)$ is
defined by
$$
\alpha = df,\quad f :=\frac{1}{4(n+1)} \mathrm{Trace}(\nabla X).
$$
and $S^{\alpha}$ is a $1$-form with values in $\mathrm{End}(TM)$
defined by
\begin{equation}\label{a2}
S_{Z}^{\alpha}:=\alpha (Z)\mathrm{Id}_{TM}+\alpha\otimes Z
-\sum_{i=1}^{3}\left(\alpha (J_{i}Z)J_{i}+ (\alpha\circ
J_{i})\otimes J_{i}Z\right) ,\quad \forall Z\in TM.
\end{equation}
Now, if $X$ is also divergence-free, then $\alpha =0$ and
$L_{X}\nabla =0$. This implies that $X$ preserves the curvature of
$g$ (viewed as a $2$-form with values in $\mathrm{End}(TM)$) and
also the Ricci tensor (viewed as a bilinear form on $TM$). Since
$g$ is Einstein with non-zero scalar curvature, $X$ is Killing.

\begin{notation} {\rm
Let $(M, g)$ be a quaternionic-K\"{a}hler manifold of dimension $4n\geq 8.$
We shall use the scalar product $\langle\cdot , \cdot \rangle$ on
$\Lambda^{2}(M)$, defined by
\begin{equation}\label{sc}
\langle X\wedge Y, Z\wedge V\rangle = g(X, Z) g(Y, V) - g(X, V)
g(Y, Z).
\end{equation}
With respect to the scalar product (\ref{sc}), the K\"{a}hler forms
$\{ \omega_{1}, \omega_{2}, \omega_{3}\}$ corresponding to a
local admissible
basis $\{ J_{1}, J_{2}, J_{3}\}$ of the quaternionic bundle
$Q$ are orthogonal and
$$
| \omega_{1}|^{2} = |\omega_{2}|^{2} = |\omega_{3}|^{2} = 2n.
$$
In our conventions, $\nabla$ will always denote the Levi-Civita
connection of $g$.}\end{notation}

\section{Our main result}\label{2}

Given a linear differential operator $D$, it is sometimes useful
to determine a vector bundle
connection (called a prolongation of $D$) whose space of parallel
sections is isomorphic with the kernel of $D$.
In general, there are several connections with this property.
However, if one prolongation is flat, then all are.

This Section contains the main result of this paper -  we
determine a prolongation $\mathcal D$ of the conformal-Killing
operator acting on compatible $2$-forms on a
quaternionic-K\"{a}hler manifold $(M,g)$ and we show that
$\mathcal D$ is flat if and only if the quaternionic-Weyl tensor
$W^{Q}$ of $(M, g)$ is zero. The prolongation $\mathcal D$ acts on
the direct sum bundle $S^{2}H\oplus S^{2}E\oplus TM.$ More
precisely, we state:

\begin{thm}\label{main2}
Let $(M, g)$ be a quaternionic-K\"{a}hler manifold of dimension
$4n\geq 8$, reduced scalar curvature $\nu$ and quaternionic-Weyl
tensor $W^{Q}.$ Define a connection ${\mathcal D}$ on
$S^{2}H\oplus S^{2}E\oplus TM$, by
\begin{align*}
&{\mathcal D}_{Z}(\psi, X)^{S^{2}E\oplus S^{2}H} =\nabla_{Z}\psi-
\frac{1}{4n-1}\left( X\wedge
Z+\sum_{i=1}^{3}J_{i}X\wedge J_{i}Z -\sum_{i=1}^{3}\omega_{i}(X, Z)\omega_{i}\right)\\
&{\mathcal D}_{Z}(\psi ,X)^{TM} =\nabla_{Z}X - \frac{4n-1}{4}
i_{Z}\left( \nu \psi^{S^{2}E} -2\nu \psi^{S^{2}H}
+\frac{1}{n+1}W^{Q}(\psi)\right) ,\\
\end{align*}
where $\{ J_{1}, J_{2}, J_{3}\}$ is a local admissible basis of
the quaternionic bundle $Q$, with K\"{a}hler forms $\omega_{1},
\omega_{2}, \omega_{3}$, $\psi$ is a section of $S^{2}H\oplus
S^{2}E$ and $X$, $Z$ are vector fields on $M$. Then $\mathcal D$
is a prolongation of the conformal-Killing operator acting on
compatible $2$-forms. Moreover, $\mathcal D$ is flat if and only
if $W^{Q}=0.$
\end{thm}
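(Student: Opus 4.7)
The plan is to prove Theorem \ref{main2} in two stages: first, that $\mathcal{D}$-parallel sections $(\psi,X)$ correspond bijectively to compatible conformal-Killing 2-forms (with $X=\delta\psi$), and then that the curvature of $\mathcal{D}$ vanishes precisely when $W^Q=0$.

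For the prolongation property I would begin by contracting the first $\mathcal{D}$-equation with an orthonormal frame $\{e_k\}$ to compute $\delta\psi$. A short calculation using $\sum_k(J_iX\wedge J_ie_k)(e_k,Y)=-g(X,Y)$ and $\sum_k\omega_i(X,e_k)\omega_i(e_k,Y)=-g(X,Y)$ shows that these two sums cancel, leaving $\delta\psi=X$. With $X=\delta\psi$, the summand $\frac{1}{4n-1}X\wedge Z$ reproduces the $-\frac{1}{4n-1}Z\wedge\delta\psi$ term of (\ref{conformalkil}), while the remaining terms $\frac{1}{4n-1}(\sum_iJ_iX\wedge J_iZ-\sum_i\omega_i(X,Z)\omega_i)$ must match $\frac{1}{3}i_Zd\psi$. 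This is a nontrivial constraint relating $d\psi$ and $\delta\psi$; it is precisely what \emph{compatibility} of $\psi$ enforces, essentially because in the $\mathrm{Sp}(1)\mathrm{Sp}(n)$-decomposition of $T^{*}M\otimes(S^2H\oplus S^2E)$ the Penrose-type irreducibles forbidden by the conformal-Killing condition coincide with those forbidden by compatibility, leaving only the codifferential piece free. Both directions of the equivalence then reduce to computing $d\psi$ directly from the ansatz.

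The second $\mathcal{D}$-equation is obtained by differentiating the first, antisymmetrizing, and invoking the Ricci identity $[\nabla_W,\nabla_Z]\psi=R^\nabla_{W,Z}\psi$ with $R^g$ supplied by (\ref{rg}). The three pieces of (\ref{rg})---the $\mathfrak{sp}(n)$-piece $-\frac{\nu}{4}(V\wedge W+\sum_iJ_iV\wedge J_iW)$, the $\mathfrak{sp}(1)$-piece $-\frac{\nu}{2}\sum_i\omega_i(V,W)\omega_i$, and the quaternionic-Weyl piece $W^Q_{V,W}$---produce, after acting on $\psi\in S^2H\oplus S^2E$, the three summands $\nu\psi^{S^2E}$, $-2\nu\psi^{S^2H}$, $\frac{1}{n+1}W^Q(\psi)$ on the right-hand side of the second $\mathcal{D}$-equation; the overall factor $\frac{4n-1}{4}$ is fixed by matching coefficients.

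For the flatness claim I expand $R^\mathcal{D}_{V,W}$. Writing $\mathcal{A}(Z;X):=\frac{1}{4n-1}(X\wedge Z+\sum_iJ_iX\wedge J_iZ-\sum_i\omega_i(X,Z)\omega_i)$ and $\mathcal{B}(Z;\psi):=\frac{4n-1}{4}i_Z(\nu\psi^{S^2E}-2\nu\psi^{S^2H}+\frac{1}{n+1}W^Q(\psi))$ for the two correction tensors, the Christoffel-type terms cancel and one gets
$$(R^\mathcal{D}_{V,W}(\psi,X))^{S^2H\oplus S^2E}=R^\nabla_{V,W}\psi+\mathcal{A}(V;\mathcal{B}(W;\psi))-\mathcal{A}(W;\mathcal{B}(V;\psi)),$$
with a parallel expression on the $TM$-component. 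By the construction of $\mathcal{B}$ in the previous paragraph, the $\nu$-dependent contributions of the $\mathcal{A}\circ\mathcal{B}$-composition precisely cancel the constant-curvature pieces of $R^\nabla\psi$, leaving an obstruction which is a non-degenerate linear expression in $W^Q(\psi)$. Hence $R^\mathcal{D}\equiv 0$ iff $W^Q\equiv 0$. The main obstacle throughout is algebraic bookkeeping---confirming the compatibility-forced identity relating $d\psi$ to $\delta\psi$, and verifying the cancellation of $\nu$-terms in the curvature---both of which rely on identities such as $\omega_i(J_jV,J_jW)=(2\delta_{ij}-1)\omega_i(V,W)$ together with the $S^2H$- and $S^2E$-projection formulas from Section \ref{0}.
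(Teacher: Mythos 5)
Your overall architecture matches the paper's (establish the equivalence of $\mathcal D$-parallelism with the conformal-Killing condition, derive the $TM$-equation from the Ricci identity, then compute the curvature), and your contraction showing $\delta\psi=X$ is correct. But two steps that carry most of the weight of the proof are asserted rather than proved. First, the identity
$\frac{1}{3}i_Zd\psi=\frac{1}{4n-1}\bigl(\sum_iJ_i\delta\psi\wedge J_iZ-\sum_i\omega_i(\delta\psi,Z)\omega_i\bigr)$
for a \emph{compatible} conformal-Killing $2$-form is the real content of the first stage (Proposition \ref{corolar} in the paper), and your justification --- that the irreducible components of $T^{*}M\otimes(S^{2}H\oplus S^{2}E)$ killed by the conformal-Killing condition coincide with those killed by compatibility --- is not a proof as stated: you neither identify the $\mathrm{Sp}(1)\mathrm{Sp}(n)$-irreducibles in question nor fix the coefficients, and the specific constants in (\ref{echivalent}) cannot come from Schur-type reasoning alone. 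The paper gets there by projecting the conformal-Killing equation onto $S^{2}H$, computing the auxiliary tensor $E(\psi,Y)$ on each of $S^{2}E$ and $S^{2}H$, deducing the twistor equation for $\psi^{S^{2}H}$ together with the relations $\delta\psi^{S^{2}H}=-\tfrac{3}{4n-1}X$ and $\delta\psi^{S^{2}E}=\tfrac{4n+2}{4n-1}X$, and only then deriving $d\psi=-\tfrac{3}{4n-1}\sum_iJ_iX\wedge\omega_i$. Some such computation is unavoidable.

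Second, to extract $\nabla_ZX$ from the skew-symmetrized, differentiated equation you must trace an identity of the schematic form $[R^{g}_{Z,Y},\psi]=(\nabla X\wedge\mathrm{Id})$-terms, and solving the resulting linear system for $\nabla X$ requires knowing a priori that $\nabla X$ has no component in $S^{2}H\otimes\Lambda^{2}_{0}E$. This is genuinely nontrivial: the paper proves it either from irreducibility of the holonomy representation when $\nu\neq0$ or, for arbitrary $\nu$ (including the locally hyper-K\"ahler case), by projecting a Weitzenb\"ock formula onto $S^{2}H\otimes\Lambda^{2}_{0}E$. Your sketch omits this entirely, and without it the trace argument does not close. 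Two smaller points: on the $TM$-component of $R^{\mathcal D}$ the ``Christoffel-type'' terms do \emph{not} all cancel, because the correction tensor $\mathcal B$ involves $W^{Q}$, which is not parallel --- this produces the $\nabla W^{Q}$-term $C(\psi^{S^{2}E})$ in the paper's curvature formula; and the cleanest way to see that flatness forces $W^{Q}=0$ is not ``non-degeneracy in $W^{Q}(\psi)$'' but the fact that the $TM$-component of the curvature contains the term $\tfrac{n+2}{n+1}W^{Q}_{Y,Z}X$ with $X$ arbitrary.
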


We divide the proof of Theorem \ref{main2} into three steps. In a
first stage, we rewrite the conformal-Killing equation on
compatible $2$-forms in a way suitable for the prolongation
procedure (see Proposition \ref{corolar}). We remark that
Proposition \ref{corolar} has already been proved in
\cite{lianamax} in the compact case. We now adapt the argument
also to the case when the quaternionic-K\"{a}hler manifold is
non-compact. In a second stage, we show that the connection
$\mathcal D$ from Theorem \ref{main2} is a prolongation of the
conformal-Killing operator acting on compatible $2$-forms (see
Proposition \ref{p1}). Finally, we compute the curvature of
$\mathcal D$ and we show that $\mathcal D$ is flat if and only if
$W^{Q}=0$ (see Proposition \ref{curbura}). Details are as follows.

\begin{prop}\label{corolar} A compatible $2$-form $\psi$ on $(M, g)$ is conformal-Killing if
and only if it satisfies
\begin{equation}\label{echivalent}
\nabla_{Y}\psi = \frac{1}{4n-1}\left( X\wedge Y
+\sum_{i=1}^{3}J_{i}X\wedge J_{i}Y -\sum_{i=1}^{3}\omega_{i}(X,
Y)\omega_{i} \right) ,\quad\forall Y\in TM ,
\end{equation}
where $\{ J_{1}, J_{2}, J_{3}\}$ is a local admissible basis of
the quaternionic bundle $Q$, $\omega_{1},\omega_{2},\omega_{3}$
are the associated K\"{a}hler forms and $X$ is a vector field
(necessarily equal to $\delta\psi$). In particular,
$\psi^{S^{2}H}$ satisfies the twistor equation.
\end{prop}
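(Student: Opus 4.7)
My plan is to prove both implications by direct tensor calculus, using the projection formulas (\ref{ad-s2}) together with standard contraction identities for an admissible basis $\{J_1,J_2,J_3\}$. For the sufficiency direction, assume (\ref{echivalent}) holds with some vector field $X$. First observe that the right-hand side of (\ref{echivalent}) is automatically a section of $S^{2}H \oplus S^{2}E$ for each $Y$, consistent with the compatibility of $\nabla_Y\psi$. Then compute the codifferential by tracing,
\[(\delta\psi)(Z) = -\sum_k (\nabla_{e_k}\psi)(e_k, Z),\]
substituting (\ref{echivalent}) and applying the orthonormal-frame identities $\sum_k (X\wedge e_k)(e_k,Z)=-(4n-1)g(X,Z)$, $\sum_k (J_i X\wedge J_i e_k)(e_k,Z)=-g(X,Z)$, and $\sum_k \omega_i(X,e_k)\omega_i(e_k,Z)=-g(X,Z)$; the three contributions combine to give $\delta\psi = X$. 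Antisymmetrizing (\ref{echivalent}) over its arguments recovers $d\psi$, and substitution back into (\ref{conformalkil}) verifies the conformal-Killing property.

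For the necessity direction, set $X:=\delta\psi$ and rewrite (\ref{conformalkil}) as $\frac{1}{3}\, i_Y d\psi = \nabla_Y\psi + \frac{1}{4n-1}\,Y\wedge X$. Since the decomposition (\ref{lambda2}) is $\nabla$-parallel, compatibility of $\psi$ forces $\nabla_Y\psi \in S^{2}H\oplus S^{2}E$. Hence the $S^{2}H\otimes \Lambda^{2}_{0}E$-component of $\frac{1}{3}\, i_Y d\psi$ must cancel that of $\frac{1}{4n-1}\, Y\wedge X$. Projecting the displayed equation further onto $S^{2}H$ and $S^{2}E$ via (\ref{ad-s2}) and matching symmetries, I expect the bookkeeping to yield
\[\frac{1}{3}\, i_Y d\psi = \frac{1}{4n-1}\Bigl(\sum_i J_i X\wedge J_i Y - \sum_i \omega_i(X,Y)\omega_i\Bigr),\]
which, combined with $X\wedge Y = -Y\wedge X$, gives (\ref{echivalent}). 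This adapts the compact argument of \cite{lianamax} to the non-compact setting, since compactness plays no essential role in the algebraic manipulation.

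The ``in particular'' clause follows by projecting (\ref{echivalent}) onto $S^{2}H$: using $[X\wedge Y]^{S^{2}H}=\frac{1}{2n}\sum_i \omega_i(X,Y)\omega_i$ and the analogous $S^{2}H$-projections of the other two summands, $\nabla_Y \psi^{S^{2}H}$ reduces to the form $\sum_i c_i(Y)\omega_i$ with $c_i(Y)$ depending linearly on $Y$ through the $J_i$, matching formula (\ref{sigma}) with $\sigma = \psi^{S^{2}H}$ to give $\bar{D}\psi^{S^{2}H} = 0$. The main technical obstacle is the projection bookkeeping in the necessity direction; once the $S^{2}H$- and $S^{2}E$-projections of $Y\wedge X$, $\sum_i J_i X \wedge J_i Y$, and $\sum_i \omega_i(X,Y)\omega_i$ are all computed, the compatibility of $\nabla_Y\psi$ pins down the combination in (\ref{echivalent}) uniquely.
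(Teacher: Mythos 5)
Your sufficiency direction and the ``in particular'' clause are fine: the trace identities you quote are correct and do give $\delta\psi=X$, and the $S^{2}H$-projection of (\ref{echivalent}) does reduce to $\nabla_{Y}\psi^{S^{2}H}=-\frac{1}{4n-1}\sum_{i}\omega_{i}(X,Y)\omega_{i}$, which is the twistor equation.

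The necessity direction, however, has a genuine gap, and it is located exactly where the whole content of the proposition lies. You write down the correct target identity
\[
\tfrac{1}{3}\, i_{Y}d\psi \;=\; \tfrac{1}{4n-1}\Bigl(\textstyle\sum_{i} J_{i}X\wedge J_{i}Y-\sum_{i}\omega_{i}(X,Y)\,\omega_{i}\Bigr),
\]
(equivalently, $d\psi=-\frac{3}{4n-1}\sum_{i}J_{i}X\wedge\omega_{i}$), but you do not derive it: ``I expect the bookkeeping to yield'' is not a proof, and the route you sketch cannot produce it. Projecting the single pointwise equation $\nabla_{Y}\psi=\frac{1}{3}i_{Y}d\psi+\frac{1}{4n-1}X\wedge Y$ onto the three summands of (\ref{lambda2}) for each fixed $Y$ only tells you that the $S^{2}H\otimes\Lambda^{2}_{0}E$-part of $\frac{1}{3}i_{Y}d\psi$ equals $-\frac{1}{4n-1}(Y\wedge X)^{S^{2}H\otimes\Lambda^{2}_{0}E}$; it gives no information about the $S^{2}H$- and $S^{2}E$-parts of $i_{Y}d\psi$, because those projections merely restate the equation with two unknowns ($\nabla_{Y}\psi$ and $i_{Y}d\psi$) on the two sides. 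To close the loop one must use that $d\psi$ is the alternation of $\nabla\psi$, i.e.\ $d\psi(Y,e_{k},J_{i}e_{k})=(\nabla_{Y}\psi)(e_{k},J_{i}e_{k})-(\nabla_{e_{k}}\psi)(Y,J_{i}e_{k})+(\nabla_{J_{i}e_{k}}\psi)(Y,e_{k})$, trace this against the K\"ahler forms, and evaluate the resulting correction term (the quantity $E(\psi,Y)$ of the paper) separately on $\psi^{S^{2}E}$ (using that it is of type $(1,1)$ for every compatible complex structure) and on $\psi^{S^{2}H}$. This yields a solvable linear equation for $\nabla_{Y}\psi^{S^{2}H}$, whence the twistor equation and the relations $\delta\psi^{S^{2}H}=-\frac{3}{4n-1}X$, $\delta\psi^{S^{2}E}=\frac{4n+2}{4n-1}X$; and only then, by a further argument (Lemma 5 of \cite{lianamax}) exploiting the twistor equation, does one obtain the explicit formula for $d\psi$ that you posited. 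None of these steps is automatic ``matching of symmetries,'' and without them your argument is circular: the conformal-Killing equation relates $\nabla\psi$ to $d\psi$, but $d\psi$ is itself built from $\nabla\psi$, and resolving that circularity is precisely the proof.
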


\begin{proof} Let $\psi$ be a compatible
conformal-Killing $2$-form on
$(M, g)$ and $X = \delta \psi$ its codifferential. Hence $\psi$
satisfies
\begin{equation}\label{confkil}
\nabla_{Y}\psi = \frac{1}{3}i_{Y}d\psi +\frac{1}{4n-1}X \wedge
Y,\quad \forall Y\in TM.
\end{equation}

Projecting equation (\ref{confkil}) onto $S^{2}H$ we obtain
\begin{equation}\label{s2h}
\nabla_{Y} \psi^{S^{2}H} = \frac{1}{3} (i_{Y}d\psi
)^{S^{2}H}+\frac{1}{2n(4n-1)} \sum_{i=1}^{3} \omega_{i}(X, Y)
\omega_{i}.
\end{equation}
Let $\{ e_{k}\}$ be a local orthonormal frame of $TM$. To simplify
notations, sometimes we omit below the summation sign over $1\leq
k\leq 4n.$ Note that
\begin{align*}
(i_{Y}d\psi)^{S^{2}H}& =\frac{1}{4n}\sum_{i=1}^{3} (d\psi)(Y,
e_{k},
J_{i}e_{k})\omega_{i}\\
&=\frac{1}{4n}\sum_{i=1}^{3}\left( (\nabla_{Y}\psi )(e_{k},
J_{i}e_{k}) -(\nabla_{e_{k}}\psi ) (Y, J_{i}e_{k})
+(\nabla_{J_{i}e_{k}}\psi )(Y, e_{k})\right) \omega_{i}\\
&=\nabla_{Y}\psi^{S^{2}H}+\frac{1}{4n}\sum_{i=1}^{3}\left(
(\nabla_{J_{i}e_{k}}\psi )(Y, e_{k}) -(\nabla_{e_{k}}\psi )(Y,
J_{i}e_{k})\right)\omega_{i} .
\end{align*}
Define
\begin{equation}\label{definitia}
E(\psi , Y) :=\frac{1}{4n}\sum_{i=1}^{3}\left(
(\nabla_{J_{i}e_{k}}\psi )(Y, e_{k}) -(\nabla_{e_{k}}\psi )(Y,
J_{i}e_{k})\right)\omega_{i} .
\end{equation}
With this notation, equation (\ref{s2h}) becomes
\begin{equation}\label{ADD}
\nabla_{Y}\psi^{S^{2}H} = \frac{1}{2} E(\psi ,Y) +\frac{3}{4n(4n-1)}
\sum_{i=1}^{3}\omega_{i}(X, Y)\omega_{i}.
\end{equation}
We now compute $E(\psi , Y).$ It is easy to check, using that
$\psi^{S^{2}E}$ is of type $(1,1)$ with respect to any
compatible complex structure, that
\begin{equation}\label{e1}
E(\psi^{S^{2}E}, Y)=
\frac{1}{2n}\sum_{i=1}^{3}(\delta\psi^{S^{2}E})(J_{i}Y)
\omega_{i}.
\end{equation}
On the other hand, for any fixed $i\in \{ 1,2,3\}$,
\begin{align*}
&\sum_{k=1}^{4n}\left( (\nabla_{J_{i}e_{k}}\psi^{S^{2}H} )(Y, e_{k}) -
(\nabla_{e_{k}} \psi^{S^{2}H})(Y, J_{i}e_{k})\right)\\
&=   \sum_{k=1}^{4n}\left( \langle \nabla_{J_{i}e_{k}}\psi ,
(Y\wedge e_{k})^{S^{2}H}
\rangle  - \langle \nabla_{e_{k}} \psi ,
(Y\wedge J_{i}e_{k})^{S^{2}H}\rangle\right)\\
&=\frac{1}{n} \sum_{j=1}^{3}\langle
\nabla_{J_{i}J_{j}Y}\psi^{S^{2}H}, \omega_{j}\rangle ,
\end{align*}
where we used (\ref{ad-s2}). From this and (\ref{definitia}) it
follows  that
\begin{equation}\label{e-1}
E(\psi^{S^{2}H},Y) = -\frac{1}{4n^{2}} \sum_{i,j}\langle
\nabla_{J_{j}J_{i}Y}\psi^{S^{2}H},\omega_{j}\rangle \omega_{i}-\frac{1}{n}
\nabla_{Y}\psi^{S^{2}H}.
\end{equation}
From (\ref{e1}) and (\ref{e-1}), relation (\ref{ADD}) becomes
\begin{equation}\label{wanted}
\nabla_{Y}\psi^{S^{2}H} = \sum_{i=1}^{3}\alpha (J_{i}Y) \omega_{i}
\end{equation}
where the $1$-form $\alpha\in\Omega^{1}(M)$ is given by
\begin{equation}\label{alpha}
\alpha (Y):= \frac{1}{2(2n+1)} \left( (\delta\psi^{S^{2}E})(Y)
-\frac{3}{4n-1}g(X, Y) -\frac{1}{2n}\sum_{j}\langle
\nabla_{J_{j}Y} \psi ,\omega_{j}\rangle\right) .
\end{equation}
From (\ref{wanted}) combined with (\ref{alpha}) it is easy to see
that
\begin{equation}\label{codiferentiale}
\alpha =  \frac{1}{4n-1}X,\quad \delta \psi^{S^{2}H} =
-\frac{3}{4n-1}X,\quad \delta\psi^{S^{2}E} = \frac{4n+2}{4n-1}X
\end{equation}
 and thus $\psi^{S^{2}H}$ satisfies the twistor equation
\begin{equation}\label{twistore}
\nabla_{Y}\psi^{S^{2}H} +\frac{1}{4n-1}\sum_{i=1}^{3}\omega_{i}(X,
Y)\omega_{i}=0.
\end{equation}
Using now (\ref{twistore}), an argument like in Lemma 5 of
\cite{lianamax} shows that
\begin{equation}\label{dpsi}
d\psi = -\frac{3}{4n-1}\left( J_{1}X\wedge \omega_{1}+
J_{2}X\wedge\omega_{2}+ J_{3}X\wedge \omega_{3}\right) .
\end{equation}
Substituting (\ref{dpsi}) into the conformal-Killing equation
(\ref{confkil}) we get (\ref{echivalent}), as required.
Conversely, it is clear that any solution $\psi$ of
(\ref{echivalent}) is a conformal-Killing $2$-form and $\delta
\psi =X.$
\end{proof}

We remark that Proposition \ref{corolar} implies that any compatible Killing
$2$-form on $(M, g)$ is parallel, a result previously proved, in the
compact case (and also for higher degree Killing forms),
in \cite{quatk}.

\begin{prop}\label{p1} The map
$$
\psi\rightarrow \left(\psi ,\delta \psi \right)
$$
is an isomorphism from the vector space ${\mathcal C}_{2}(M)$
of compatible conformal-Killing
$2$-forms to the vector space of $\mathcal D$-parallel sections
(where $\mathcal D$ is the connection from Theorem \ref{main2}).
\end{prop}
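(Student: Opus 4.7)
Injectivity of the map $\psi \mapsto (\psi, \delta\psi)$ is immediate, since $\psi$ is recovered by projection onto the first factor. The first component of the $\mathcal{D}$-parallelism equation reads
\begin{equation*}
\nabla_Z \psi = \frac{1}{4n-1}\left( X \wedge Z + \sum_{i=1}^{3} J_i X \wedge J_i Z - \sum_{i=1}^{3} \omega_i(X, Z)\omega_i\right),
\end{equation*}
which is precisely equation (\ref{echivalent}). By Proposition \ref{corolar}, this holds (for some vector field $X$) if and only if $\psi$ is a compatible conformal-Killing $2$-form and $X = \delta\psi$. Hence every $\mathcal{D}$-parallel section has the form $(\psi, \delta\psi)$ with $\psi \in \mathcal{C}_2(M)$, and conversely every such pair satisfies the first half of the $\mathcal{D}$-parallel equations.

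It thus remains to verify, for $\psi \in \mathcal{C}_2(M)$ and $X = \delta\psi$, that the second component
\begin{equation*}
\nabla_Z X = \frac{4n-1}{4}\, i_Z\!\left( \nu\,\psi^{S^2E} - 2\nu\,\psi^{S^2H} + \frac{1}{n+1} W^{Q}(\psi) \right)
\end{equation*}
is automatic; this is the core of the proposition. The plan is to derive it as an integrability consequence of (\ref{echivalent}): differentiate (\ref{echivalent}) once more in a direction $Y$, antisymmetrize in $Y$ and $Z$, and apply the Ricci identity $R^g_{Y, Z}\psi = [\nabla_Y, \nabla_Z]\psi - \nabla_{[Y, Z]}\psi$. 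Expanding the right-hand side by the Leibniz rule---carefully accounting for the terms $\nabla J_i = \sum_j \alpha_{ij} J_j$ produced by the local variation of the admissible basis---and using torsion-freeness to absorb the $[Y, Z]$ contribution yields a tensorial identity that relates $R^g_{Y, Z}\psi$, $\nabla X$ and purely algebraic expressions in $\psi$.

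Into the left-hand side of this identity one then substitutes the curvature decomposition (\ref{rg}), splitting $R^g$ into its Einstein part (proportional to $\nu$) and its quaternionic-Weyl part $W^{Q}$. Taking a suitable trace---concretely, setting $Y = e_k$ and summing over a local orthonormal frame---isolates $\nabla_Z X$ on one side and, on the other, produces the desired expression, with the coefficients $\nu$, $-2\nu$ and $1/(n+1)$ in front of $\psi^{S^2E}$, $\psi^{S^2H}$ and $W^{Q}(\psi)$ arising from the projection formulas (\ref{ad-s2}) together with the fact that the first summand of $R^g$ in (\ref{rg}) acts with different eigenvalues on the $S^2H$ and $S^2E$ components of $\Lambda^2$. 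The main obstacle is the meticulous bookkeeping of this trace computation---in particular, matching the numerical coefficients and controlling the contributions from the $\nabla J_i$ terms---but no phenomenon beyond the standard prolongation integrability mechanism is involved.
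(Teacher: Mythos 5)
Your overall strategy coincides with the paper's: the first component of the $\mathcal D$-parallelism equation is handled by Proposition \ref{corolar}, and the $TM$-component is to be derived by differentiating (\ref{echivalent}), skew-symmetrizing, inserting the curvature decomposition (\ref{rg}), and tracing. (A minor remark: the $\nabla J_{i}=\sum_{j}\alpha_{ij}J_{j}$ terms you propose to track actually cancel automatically, since the combinations $\sum_{i}J_{i}X\wedge J_{i}Y$ and $\sum_{i}\omega_{i}(X,Y)\omega_{i}$ are independent of the admissible basis.)

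There is, however, a genuine gap in the trace step, where you assert that setting $Y=e_{k}$ and summing ``isolates $\nabla_{Z}X$'' and that only bookkeeping remains. It does not. The algebraic operator appearing on the right-hand side of the skew-symmetrized identity involves terms of the type $\sum_{i}J_{i}\nabla_{J_{i}Z}X$, so a single trace produces a linear combination of $(\nabla X)(Z,V)$ and $(\nabla X)^{S^{2}H}(Z,V)$ --- this is equation (\ref{sumaadaug}) of the paper --- and, a priori, additional contributions from the $S^{2}H\otimes\Lambda^{2}_{0}E$-component of $\nabla X$. Two further inputs are needed. First, one must establish beforehand that $\nabla X$ is a section of $S^{2}H\oplus S^{2}E$; this is a separate argument, not a computation internal to your scheme: the paper uses that $X=\delta\psi$ is Killing (because $g$ is Einstein) together with irreducibility of $(M,g)$ when $\nu\neq 0$, and, to cover the Ricci-flat case, a Weitzenb\"ock argument projecting (\ref{weiz}) onto $S^{2}H\otimes\Lambda^{2}_{0}E$. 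Second, even granting this, one trace yields only a single linear relation between $\nabla X$ and $(\nabla X)^{S^{2}H}$; the paper obtains a second independent relation by replacing $(Z,V)$ with $(J_{i}Z,J_{i}V)$ and summing over $i$, and then solves the resulting $2\times 2$ system for $\nabla_{Z}X$. Without both steps the computation does not close, so the proposal as written is incomplete at precisely the point it declares routine.
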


\begin{proof}
Let $\psi$ be a compatible conformal-Killing $2$-form and $X:=
\delta \psi$ its codifferential. Since $g$ is Einstein, $X$ is a
Killing vector field (see \cite{sem}). When $\nu\neq 0$, $(M, g)$
is irreducible (see Theorem 14.45 of \cite{besse}) and thus
$\nabla X$ is a section of $S^{2}H\oplus S^{2}E$, because $X$ is
Killing (see \cite{kob}, page 246). More generally, when $\nu$ is
arbitrary, a Weitzenb\"{o}ck argument (see \cite{sem}) shows that
\begin{equation}\label{weiz}
\frac{2}{3}\Delta \psi - q(R)\psi +\frac{4(n-1)}{3(4n-1)}d X=0
\end{equation}
where $\Delta =d\delta +\delta d$ is the Laplace operator and
$q(R)$ is a bundle endomorphism of $\Lambda^{2}(M)$, related to
$\Delta$ by $\Delta = \nabla^{*}\nabla + q(R)$, where
$\nabla^{*}\nabla = -\sum_{k=1}^{4n} \nabla^{2}(\psi )(e_{k},
e_{k})$ and $\{ e_{k}\}$ is a local orthonormal frame of $TM.$
Both $\Delta$ and $q(R)$ preserve the irreducible sub-bundles of
$\Lambda^{2}(M)$ and $q(R)$ acts on these sub-bundles by scalar
multiplication (see \cite{w}, Lemma 2.5). Projecting (\ref{weiz})
onto $S^{2}H\otimes \Lambda^{2}_{0}(E)$ and using that $\psi$ a
section of $S^{2}H\oplus S^{2}E$ we get that $\nabla X$ is a
section of $S^{2}H\oplus S^{2}E$, for any $\nu .$

After these preliminary remarks, we now prove that $(\psi , X)$ is
$\mathcal D$-parallel, as follows. From Proposition \ref{corolar},
we know that
$$
{\mathcal D}_{Z}(\psi, X)^{S^{2}E\oplus S^{2}H}=0,\quad\forall Z\in TM.
$$
We need to show that also
\begin{equation}\label{TM}
{\mathcal D}_{Z}(\psi, X)^{TM}=0,\quad\forall Z\in TM.
\end{equation}
For this,  we take the covariant derivative with respect to $Z$ of the
conformal-Killing equation (\ref{echivalent}) and we skew symmetrize
in $Y$ and $Z$. We obtain:
\begin{align*}
[R^{g}_{Z, Y}, \psi] &= \frac{1}{4n-1}\left( \nabla_{Z}X\wedge Y +
\sum_{i=1}^{3}J_{i}\nabla_{Z}X \wedge J_{i}Y -\sum_{i=1}^{3}
\omega_{i}(\nabla_{Z}X,Y)\omega_{i}\right)\\
&-\frac{1}{4n-1}\left( \nabla_{Y}X\wedge Z +
\sum_{i=1}^{3}J_{i}\nabla_{Y}X \wedge J_{i}Z-\sum_{i=1}^{3}\omega_{i}
(\nabla_{Y}X,Z)\omega_{i}\right) .\\
\end{align*}
Applying this relation to a vector $U$, taking the trace over $Y$
and $U$ and applying the result to a vector $V$ we obtain
\begin{equation}\label{sumaadaug}
-4(n+1)(\nabla X)(Z, V)  +4(n+2)(\nabla X)^{S^{2}H}(Z, V) =
(4n-1)g([R^{g}_{Z, e_{k}}\psi ] (e_{k}), V),
\end{equation}
where we used that $\nabla X$ is a section of $S^{2}H\oplus
S^{2}E.$ For any fixed $i\in \{ 1,2,3\}$, replace in
(\ref{sumaadaug}) the pair $(Z, V)$ with $(J_{i}Z, J_{i}V)$ and
sum over $i$. We obtain:
$$
12(n+1)(\nabla X)(Z, V) -4(3n+2)(\nabla X)^{S^{2}H}(Z, V)
= (4n-1) \sum_{i=1}^{3}
g(J_{i}[R^{g}_{J_{i}Z, e_{k}},\psi ](e_{k}), V).
$$
Combining this relation with (\ref{sumaadaug}) we get
\begin{align*}
\nabla_{Z}X  = \frac{4n-1}{16(n+1)}\left( (3n+2)
[R^{g}_{Z,e_{k}},\psi ](e_{k}) + (n+2) \sum_{i=1}^{3}
J_{i}[R^{g}_{J_{i}Z,e_{k}},\psi ](e_{k})\right) .
\end{align*}
We now compute the right hand side of this expression. We first
notice that
\begin{equation}
(3n+2)
[R^{g}_{Z,e_{k}},\psi^{S^{2}E} ](e_{k}) + (n+2) \sum_{i=1}^{3}
J_{i}[R^{g}_{J_{i}Z,e_{k}},\psi^{S^{2}E} ](e_{k})
= -4 [R^{g}_{Z,e_{k}},\psi^{S^{2}E} ](e_{k}).
\end{equation}
On the other hand, from (\ref{rg}),
$$
[ R^{g}_{Z, e_{k}}, \psi^{S^{2}E}](e_{k}) = -\nu (n+1) i_{Z}\psi^{S^{2}E} +
[W^{Q}_{Z,e_{k}},\psi^{S^{2}E}](e_{k}).
$$
Moreover,
\begin{equation}\label{wq}
[W^{Q}_{Z,e_{k}},\psi^{S^{2}E}](e_{k}) =
W^{Q}_{Z, e_{k}}(\psi^{S^{2}E}(e_{k})) = - i_{Z}W^{Q}(\psi^{S^{2}E}),
\end{equation}
where in the first equality (\ref{wq}) we used that $W^{Q}$ is
Ricci-flat and the second equality (\ref{wq}) follows from the
following argument: for any $Y,Z\in TM$,
\begin{align*}
g \left( W^{Q}_{Z, e_{k}}(\psi^{S^{2}E}(e_{k})), Y\right) & =
- \psi^{S^{2}E}(e_{k}, W^{Q}_{Z, e_{k}}(Y)) = -
\langle \psi^{S^{2}E}, e_{k}\wedge W^{Q}_{Z, e_{k}}(Y)
\rangle\\
& = - \langle \psi^{S^{2}E}, W^{Q}_{Z, Y}\rangle = - g( i_{Z}W^{Q}
(\psi^{S^{2}E}), Y),
\end{align*}
for any vector fields $Y$ and $Z$. Therefore,
\begin{align*}
(3n+2) [R^{g}_{Z, e_{k}}, \psi^{S^{2}E}](e_{k}) +(n+2)
\sum_{i=1}^{3}J_{i} [R^{g}_{J_{i}Z, e_{k}}, \psi^{S^{2}E}](e_{k})\\
 =4\left( \nu (n+1) i_{Z}\psi^{S^{2}E}  + i_{Z}W^{Q}(\psi )\right) .
\end{align*}
Similarly, we can prove that
\begin{align*}
(3n+2) [R^{g}_{Z, e_{k}}, \psi^{S^{2}H}](e_{k}) +(n+2)
\sum_{i=1}^{3}J_{i} [R^{g}_{J_{i}Z, e_{k}}, \psi^{S^{2}H}](e_{k})
 = - 8\nu (n+1) i_{Z}\psi^{S^{2}H}.
\end{align*}
We finally obtain
\begin{equation}\label{final}
\nabla_{Z}X = \frac{(4n-1)\nu }{4} i_{Z}\psi^{S^{2}E}
+\frac{4n-1}{4(n+1)}i_{Z}W^{Q}(\psi ) -\frac{\nu
(4n-1)}{2}i_{Z}\psi^{S^{2}H},
\end{equation}
which is equivalent to (\ref{TM}). Our claim follows.
\end{proof}

In order to conclude the proof of Theorem \ref{main2}, we still
need to compute the curvature of the connection $\mathcal D$ and
to show that $\mathcal D$ is flat if and only if $W^{Q}=0.$ This
is done in the following proposition.

\begin{prop}\label{curbura}
The curvature $R^{\mathcal D}$ of the connection $\mathcal D$
defined in Theorem \ref{main2} has
the following expression: for any section $(\psi , X)$ of
$S^{2}H\oplus S^{2}E\oplus TM$ and vector fields $Y, Z\in
{\mathcal X}(M)$,
\begin{align*}
&R^{\mathcal D}_{Y, Z}(\psi , X)^{S^{2}H\oplus S^{2}E}=[W^{Q}_{Y,
Z}, \psi ] - \frac{1}{n+1}\left(
W^{Q}(\psi )\wedge\mathrm{Id}\right)^{S^{2}E}_{Y, Z}\\
&R^{\mathcal D}_{Y, Z}(\psi , X)^{TM}= \frac{n+2}{n+1}
W^{Q}_{Y, Z}X + \frac{4n-1}{4 (n+1)} C(\psi^{S^{2}E})_{Y, Z} ,\\
\end{align*}
where
\begin{equation}
(W^{Q}(\psi )\wedge\mathrm{Id})^{S^{2}E}_{Y, Z}:=\left(
i_{Y}W^{Q}(\psi )\wedge Z - i_{Z}W^{Q}(\psi )\wedge
Y\right)^{S^{2}E}
\end{equation}
and
\begin{equation}\label{adaug3}
C(\psi^{S^{2}E})_{Y, Z}:= i_{Y}\left( \nabla_{Z}W^{Q}\right)
(\psi^{S^{2}E}) -i_{Z}\left( \nabla_{Y}W^{Q}\right)
(\psi^{S^{2}E}).
\end{equation}
In particular, $\mathcal D$ is flat if and only if $W^{Q}=0.$
\end{prop}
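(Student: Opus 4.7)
The strategy is to write $\mathcal{D}=\nabla+\Phi$, where $\Phi$ is a $1$-form with values in $\operatorname{End}(S^{2}H\oplus S^{2}E\oplus TM)$ read off from the two formulas in Theorem \ref{main2}, and then apply the standard identity $R^{\mathcal D}_{Y,Z}=R^{\nabla}_{Y,Z}+(\nabla_{Y}\Phi)_{Z}-(\nabla_{Z}\Phi)_{Y}+[\Phi_{Y},\Phi_{Z}]$. The Levi-Civita curvature $R^{\nabla}$ acts componentwise: by the commutator $[R^{g}_{Y,Z},\psi]$ on the $2$-form part and as $R^{g}_{Y,Z}X$ on the tangent part. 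Expanding each via (\ref{rg}) splits the result into a $W^{Q}$-piece and a ``$\nu$-piece'' corresponding to the curvature of the model space $\mathbb{H}P^{n}$.

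For convenience I set $A(U,V):=U\wedge V+\sum_{i}J_{i}U\wedge J_{i}V-\sum_{i}\omega_{i}(U,V)\omega_{i}$ and $B(\phi):=\nu\phi^{S^{2}E}-2\nu\phi^{S^{2}H}+\tfrac{1}{n+1}W^{Q}(\phi)$, so that $\mathcal{D}_{Z}(\psi,X)=(\nabla_{Z}\psi-\tfrac{1}{4n-1}A(X,Z),\,\nabla_{Z}X-\tfrac{4n-1}{4}i_{Z}B(\psi))$. Computing the $S^{2}H\oplus S^{2}E$-component of $R^{\mathcal D}_{Y,Z}(\psi,X)$, the cross-terms involving $\nabla_{Y}X$ and $\nabla_{Z}X$ cancel by anti-symmetrization, and the $A(X,\nabla_{Y}Z-\nabla_{Z}Y-[Y,Z])$-contribution vanishes by the torsion-free property, leaving $[R^{g}_{Y,Z},\psi]+\tfrac{1}{4}\bigl[A(i_{Z}B(\psi),Y)-A(i_{Y}B(\psi),Z)\bigr]$. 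Using $A(U,V)=4(U\wedge V)^{S^{2}E}-2n(U\wedge V)^{S^{2}H}$ and $\sum_{i}\omega_{i}(U,V)\omega_{i}=2n(U\wedge V)^{S^{2}H}$, a direct expansion of $[R^{g}_{Y,Z},\psi]$ via (\ref{rg}) shows that its $\nu$-part cancels exactly against the $\nu$-contributions from $B(\psi)$, leaving the claimed residue $[W^{Q}_{Y,Z},\psi]-\tfrac{1}{n+1}(W^{Q}(\psi)\wedge\mathrm{Id})^{S^{2}E}_{Y,Z}$.

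The $TM$-component is treated similarly: the $i_{Z}B(\nabla_{Y}\psi)-i_{Y}B(\nabla_{Z}\psi)$-terms disappear by anti-symmetrization, the torsion-free property disposes of the $[Y,Z]$-terms, and what survives is $R^{g}_{Y,Z}X-\tfrac{4n-1}{4}[i_{Z}(\nabla_{Y}B)(\psi)-i_{Y}(\nabla_{Z}B)(\psi)]+\tfrac{1}{4}[i_{Y}B(A(X,Z))-i_{Z}B(A(X,Y))]$. Since $\nu$ is constant (the Einstein property), $\nabla B$ feels only $\nabla W^{Q}$; using that $W^{Q}$ is $Q$-invariant and Ricci-flat, one checks that $W^{Q}$ annihilates $S^{2}H$ and hence so does $\nabla W^{Q}$, so the derivative term reduces to $\tfrac{4n-1}{4(n+1)}C(\psi^{S^{2}E})_{Y,Z}$. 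The remaining task is to show that the $\nu$-part of $R^{g}_{Y,Z}X$ combines with the quadratic $B(A(X,\cdot))$-term, again by the above projection identities, to produce precisely $\tfrac{n+2}{n+1}W^{Q}_{Y,Z}X$.

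The equivalence ``$\mathcal{D}$ flat $\Leftrightarrow W^{Q}=0$'' is then immediate: the ``if'' direction is visible from the explicit formulas, and for the converse one sets $\psi=0$ in the $TM$-component to obtain $\tfrac{n+2}{n+1}W^{Q}_{Y,Z}X=0$ for all $X,Y,Z$, which forces $W^{Q}=0$. The main obstacle is the careful bookkeeping of the $\nu$-cancellations; many projection identities between $S^{2}H$, $S^{2}E$ and the mixed component $S^{2}H\otimes\Lambda^{2}_{0}E$ must be tracked in parallel, and the conceptual reason for the cancellation is that $\mathcal{D}$ is engineered so that its parallel sections recover the space of compatible conformal-Killing $2$-forms on $\mathbb{H}P^{n}$, on which $W^{Q}$ vanishes.
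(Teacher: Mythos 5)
Your proposal is correct and follows essentially the same route as the paper: expand $R^{\mathcal D}$ as the Levi-Civita curvature plus the contributions of the zeroth-order terms of $\mathcal D$, then use the decomposition (\ref{rg}) of $R^{g}$ together with the projection identities between $S^{2}H$ and $S^{2}E$ to cancel the $\nu$-terms, leaving only the $W^{Q}$-terms. The verifications you defer do go through (in particular, the first Bianchi identity for $W^{Q}$ converts $i_{Y}W^{Q}(X\wedge Z)-i_{Z}W^{Q}(X\wedge Y)$ into $W^{Q}_{Y,Z}X$, yielding the coefficient $\tfrac{n+2}{n+1}$), and your treatment of the $TM$-component is in fact more explicit than the paper's, which only remarks that it ``can be computed in a similar way.''
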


\begin{proof}
The $S^{2}H\oplus S^{2}E$ component of $R^{\mathcal
D}$ can be computed as follows. It is straightforward to check that
\begin{align*}
R^{\mathcal D}_{Y, Z}(\psi , X)^{S^{2}H\oplus S^{2}E}&= [R^{g}_{Y,
Z}, \psi ] -\nu \left( i_{Y}(\psi^{S^{2}E})\wedge Z -
i_{Z}(\psi^{S^{2}E})\wedge Y\right)^{S^{2}E}\\
&-\frac{1}{n+1} \left( i_{Y}W^{Q}(\psi )\wedge Z-i_{Z}W^{Q}(\psi
)\wedge Y\right)^{S^{2}E}\\
&+2\nu \left( i_{Y}(\psi^{S^{2}H})\wedge Z -
i_{Z}(\psi^{S^{2}H})\wedge
Y\right)^{S^{2}E}\\
&-\frac{\nu}{2}\sum_{i=1}^{3} \langle \omega_{i},
i_{Y}(\psi^{S^{2}H})\wedge Z - i_{Z}(\psi^{S^{2}H})\wedge
Y\rangle\omega_{i}.
\end{align*}

On the other hand, the following equalities hold:
for any vector fields $Y, Z\in {\mathcal X}(M)$,
\begin{align*}
&[R^{g}_{Y, Z}, \psi^{S^{2}H}]=\frac{\nu}{2}\sum_{i=1}^{3} \langle
\omega_{i},i_{Y}(\psi^{S^{2}H})\wedge Z -
i_{Z}(\psi^{S^{2}H})\wedge Y\rangle \omega_{i};\\
&[R^{g}_{Y,Z}, \psi^{S^{2}E}]= \nu \left( i_{Y}(\psi^{S^{2}E})
\wedge Z - i_{Z}(\psi^{S^{2}E})\wedge
Y\right)^{S^{2}E} + [W^{Q}_{Y,Z},\psi^{S^{2}E}];\\
&\left( i_{Y}(\psi^{S^{2}H})\wedge Z -
i_{Z}(\psi^{S^{2}H})\wedge Y\right)^{S^{2}E}=0.
\end{align*}
From these relations we get
$$
R^{\mathcal D}_{Y,Z} (\psi , X)^{S^{2}H\oplus S^{2}E} =
[W^{Q}_{Y,Z},\psi ] -\frac{1}{n+1} \left( W^{Q}(\psi )
\wedge \mathrm{Id}\right)^{S^{2}E}_{Y,Z}
$$
as required.  The $TM$ component of $R^{\mathcal D}$ can be
computed in a similar way. It is obvious now that $\mathcal D$ is
flat if and only if $W^{Q}=0.$ Our claim follows.
\end{proof}

The proof of Theorem \ref{main2} is now completed. We end this
Section with the following result, which is a consequence of the
curvature computation from Proposition \ref{curbura} and will be
used in the proof of Proposition \ref{sp} from the next section.
We remark that relations similar to (\ref{w}) and (\ref{w1}) hold
also in the K\"{a}hler setting, with $u$ replaced by (the
trace-free part of) a Hamiltonian $2$-form and the
quaternionic-Weyl tensor $W^{Q}$ replaced by the Bochner tensor of
the K\"{a}hler manifold (see Proposition 9 of \cite{gaud}).

\begin{prop}\label{1}
Let $(M, g)$ be a quaternionic-K\"{a}hler manifold of dimension
$4n\geq 8$. Let $\psi$ be a compatible conformal-Killing $2$-form on
$(M, g)$, $u:= \psi^{S^{2}E}$ its $S^{2}E$-part and $X:=\delta
\psi .$ Then
\begin{equation}\label{w}
W^{Q}_{V, X}= \frac{4n-1}{4(n+2)}(\nabla_{V}W^{Q})(u) =
\frac{4n-1}{4(n+1)}\nabla_{V}\left( W^{Q}(u)\right) ,\quad\forall
V\in TM.
\end{equation}
Moreover,
\begin{equation}\label{w1}
[W^{Q}(u), v ] = (n+1) [W^{Q}(v ), u],\quad\forall v\in S^{2}E
\end{equation}
and
\begin{equation}\label{w5}
[W^{Q}(u), u ] = [(\nabla X)^{S^{2}E}, u ] =0.
\end{equation}
\end{prop}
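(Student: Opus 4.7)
The plan is to apply Proposition \ref{curbura} to the $\mathcal D$-parallel section $(\psi, X)$ produced in Proposition \ref{p1}. Since the curvature of $\mathcal D$ annihilates any parallel section, the vanishing of both components of $R^{\mathcal D}_{Y,Z}(\psi, X)$ yields two algebraic identities valid for all $Y, Z\in TM$:
\begin{equation*}
[W^{Q}_{Y,Z}, \psi] = \frac{1}{n+1}\bigl(i_{Y}W^{Q}(u)\wedge Z - i_{Z}W^{Q}(u)\wedge Y\bigr)^{S^{2}E},
\end{equation*}
\begin{equation*}
W^{Q}_{Y,Z}X = -\frac{4n-1}{4(n+2)}\bigl(i_{Y}(\nabla_{Z}W^{Q})(u) - i_{Z}(\nabla_{Y}W^{Q})(u)\bigr),
\end{equation*}
where I used $W^{Q}(\psi)=W^{Q}(u)$ (the Weyl factor kills $S^{2}H$). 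The whole Proposition will be extracted from these two relations.

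For (\ref{w}), I pair the second identity with a vector $V$ and invoke the pair symmetry $g(W^{Q}_{Y,Z}X, V)=g(W^{Q}_{V,X}Y, Z)$ together with the transpose-type identity $\langle(\nabla_{Z}W^{Q})(u), Y\wedge V\rangle = \langle u, (\nabla_{Z}W^{Q})_{Y,V}\rangle$. The second Bianchi identity for $W^{Q}$ (valid because $R^{g}-W^{Q}$ is a universal $\mathrm{Sp}(1)\mathrm{Sp}(n)$-invariant polynomial in the parallel $Q$-structure, hence $\nabla$-parallel) then collapses $(\nabla_{Z}W^{Q})_{Y,V} - (\nabla_{Y}W^{Q})_{Z,V}$ to $(\nabla_{V}W^{Q})_{Y,Z}$, giving the first equality $W^{Q}_{V,X} = \frac{4n-1}{4(n+2)}(\nabla_{V}W^{Q})(u)$. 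The second equality then follows from $(\nabla_{V}W^{Q})(u) = \nabla_{V}(W^{Q}(u)) - W^{Q}(\nabla_{V}u)$: Proposition \ref{corolar} gives $\nabla_{V}u = \frac{1}{4n-1}(X\wedge V + \sum_{i}J_{i}X\wedge J_{i}V)^{S^{2}E}$, and the quaternionic symmetry $W^{Q}_{JA,JB}=W^{Q}_{A,B}$ collapses this to $W^{Q}(\nabla_{V}u) = \frac{4}{4n-1}W^{Q}_{X,V}$, forcing the stated coefficient $\frac{4n-1}{4(n+1)}$.

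To prove (\ref{w1}), I contract the first master identity with a bivector $v\in S^{2}E$: multiplying by $v^{ij}/2$ and summing, the left-hand side becomes $[W^{Q}(v), \psi]$, which reduces to $[W^{Q}(v), u]$ because $W^{Q}(v)\in S^{2}E$ and $[S^{2}E, S^{2}H]=0$ in the holonomy algebra $\mathfrak{sp}(1)\oplus\mathfrak{sp}(n)$. A short component calculation, using $\sum_{i,j}v^{ij}(i_{e_{i}}\alpha\wedge e_{j})=[\alpha, v]$ for any skew $2$-form $\alpha$, shows that the right-hand side equals $\frac{1}{n+1}[W^{Q}(u), v]$, yielding (\ref{w1}) at once. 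Relation (\ref{w5}) is then immediate: setting $v=u$ in (\ref{w1}) gives $n[W^{Q}(u), u]=0$, while (\ref{final}) shows that $(\nabla X)^{S^{2}E} = \frac{(4n-1)\nu}{4}u + \frac{4n-1}{4(n+1)}W^{Q}(u)$, so $[(\nabla X)^{S^{2}E}, u]$ is a linear combination of $[u, u]=0$ and $[W^{Q}(u), u]=0$.

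The main obstacle I expect is the contraction argument underlying (\ref{w1}): one has to verify with care that the component identity $\sum_{i,j}v^{ij}(i_{e_{i}}\alpha\wedge e_{j}) = [\alpha, v]$ (as $2$-forms, with $[\cdot,\cdot]$ the commutator of skew endomorphisms) holds with the correct sign, and that the $S^{2}E$-projection on the right is the identity when $v\in S^{2}E$, using $[S^{2}E, S^{2}E]\subset S^{2}E$. Once this linear-algebraic bookkeeping is settled, together with the compatibility of pair symmetry with the second Bianchi identity in the derivation of (\ref{w}), the rest is structural.
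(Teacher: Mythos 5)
Your proposal is correct and follows essentially the same route as the paper: extract the two identities from $R^{\mathcal D}(\psi,\delta\psi)=0$, use the pair symmetry of $W^{Q}$ and the second Bianchi identity (which the paper implements by replacing $\nabla W^{Q}$ with $\nabla R^{g}$) to get (\ref{w}), and contract the first identity against $v\in S^{2}E$ via the relation $(W^{Q}(u)\wedge\mathrm{Id})^{S^{2}E}(v)=[W^{Q}(u),v]$ to get (\ref{w1}) and hence (\ref{w5}).
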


\begin{proof} From Propositions \ref{p1} and \ref{curbura},
$(\psi, X)$ is $\mathcal D$-parallel and
\begin{equation}\label{rel}
(n+2)W^{Q}_{Y ,Z}X +(n-\frac{1}{4})C(u)_{Y, Z}=0,\quad\forall Y,
Z\in TM.
\end{equation}
On the other hand, from the definition of the tensor $C$, for any $Y, Z,
V\in TM$,
\begin{align*}
C(u)_{Y, Z}(V)&= \langle (\nabla_{Z}R^{g})(u), Y\wedge V\rangle
-\langle (\nabla_{Y}R^{g})(u), Z\wedge V\rangle\\
&=\langle (\nabla_{Z}R^{g})(Y\wedge V), u\rangle -\langle
(\nabla_{Y}R^{g})(Z\wedge V), u\rangle\\
& =\langle (\nabla_{V}R^{g})(Y\wedge Z), u\rangle
\end{align*}
since $\nabla_{Z}R^{g}\in\mathrm{End}(\Lambda^{2}(M))$
is symmetric and $R^{g}$ satisfies the second Bianchi identity.
Relation (\ref{rel}) becomes
\begin{equation}\label{nplus}
(n+2) g( W^{Q}_{Y, Z}X, V) +(n-\frac{1}{4}) \langle
(\nabla_{V}R^{g})(Y\wedge Z), u\rangle =0
\end{equation}
and implies the first relation
(\ref{w}), because both $W^{Q}$ and $\nabla_{V}R^{g}$ are symmetric
endomorphisms of $\Lambda^{2}M$. The second relation (\ref{w})
follows from the first, by using
$$
(\nabla_{V}W^{Q})(u) = \nabla_{V} \left( W^{Q}(u)\right) - W^{Q}
(\nabla_{V}u) = \nabla_{V} \left( W^{Q}(u)\right) -\frac{4}{4n-1}
W^{Q}_{X, V},
$$
because
$$
\nabla_{V}u = \frac{1}{4n-1}\left( X\wedge V +\sum_{i=1}^{3}
J_{i}X\wedge J_{i}V\right)
$$
and $W^{Q}$ is $J_{i}$-invariant (as an $\mathrm{End}(TM)$-valued form).
Relation (\ref{w}) is proved. In order to prove relation
(\ref{w1}) note first that for any $v\in S^{2}E$,
\begin{equation}\label{put}
(W^{Q}(u)\wedge \mathrm{Id})^{S^{2}E}(v) = [W^{Q}(u), v]
\end{equation}
(relation (\ref{put}) is obtained by writing $v =
\frac{1}{2}\sum_{k} e_{k}\wedge v (e_{k})$ with respect to a local
orthonormal frame $\{ e_{k}\}$ of $TM$ and using the definition of
$(W^{Q}(u)\wedge \mathrm{Id})^{S^{2}E}$ and the $J_{i}$-invariance
of $W^{Q}(u)$ and $v$). Relation (\ref{w1}) follows now from
(\ref{put}), together with
$$
[W^{Q}_{Y, Z}, u]  = \frac{1}{n+1} (W^{Q}(u)\wedge
\mathrm{Id})^{S^{2}E}_{Y, Z},\quad\forall Y, Z\in TM
$$
(see Proposition \ref{curbura}). It remains to prove (\ref{w5}).
This is a consequence of (\ref{w1}) and
$$
(\nabla X)^{S^{2}E} = \frac{(4n-1)\nu}{4}u +\frac{4n-1}{4(n+1)} W^{Q}(u),
$$
(which follows from ${\mathcal D}(\psi , X)=0$).
\end{proof}

\section{Applications of our main result}\label{applications}

In this Section we develop several application of Theorem \ref{main2}.

\subsection{The dimension of the space ${\mathcal C}_{2}(M)$}

As a first application of Theorem \ref{main2} we determine a
sharp estimate for the dimension of the vector space
${\mathcal C}_{2}(M)$
of compatible
conformal-Killing $2$-forms on a quaternionic-K\"{a}hler
manifold $(M, g)$. It is known that on an arbitrary
Riemannian manifold (not necessarily compact) the space of
conformal-Killing forms (of any degree) is finite-dimensional and an upper bound,
which is realized on the standard sphere, was found in \cite{sem}.
For compatible conformal-Killing $2$-forms on quaternionic-K\"{a}hler
manifolds there is the following similar result:

\begin{cor}\label{dim} Let $(M, g)$ be a quaternionic-K\"{a}hler
manifold of dimension $4n\geq 8$. Then
\begin{equation}\label{dimbound}
\mathrm{dim}{\mathcal C}_{2}(M)\leq (n+1)(2n+3).
\end{equation}
Equality holds on the standard $\mathbb{H}P^{n}.$
\end{cor}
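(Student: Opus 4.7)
The plan is to combine Theorem \ref{main2} with the elementary fact that, for any vector bundle connection, evaluation at a single point embeds the space of parallel sections into the fiber over that point. By Theorem \ref{main2}, the map $\psi\mapsto (\psi,\delta\psi)$ identifies $\mathcal{C}_{2}(M)$ with the space of $\mathcal{D}$-parallel sections of $\mathcal{E}:=S^{2}H\oplus S^{2}E\oplus TM$; since any $\mathcal{D}$-parallel section is determined by its value at one point, I would conclude at once that
$$
\mathrm{dim}\,\mathcal{C}_{2}(M)\leq \mathrm{rank}_{\mathbb{R}}\,\mathcal{E}.
$$

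Next I would compute the right hand side. Identifying $S^{2}H$ and $S^{2}E$ with the real bundles whose complexifications they are (namely $Q$ and the bundle of $Q$-Hermitian $2$-forms), the real ranks are $3$ and $n(2n+1)$ respectively. The second can be read off from the decomposition (\ref{lambda2}): since $\mathrm{dim}_{\mathbb{R}}\Lambda^{2}(TM)=2n(4n-1)$ and the third summand $S^{2}H\otimes\Lambda^{2}_{0}E$ has real rank $3(2n+1)(n-1)$, the $Q$-Hermitian bundle must have real rank $2n(4n-1)-3-3(2n+1)(n-1)=n(2n+1)$. Adding $\mathrm{rank}_{\mathbb{R}}\,TM=4n$ yields
$$
\mathrm{rank}_{\mathbb{R}}\,\mathcal{E}=3+n(2n+1)+4n=2n^{2}+5n+3=(n+1)(2n+3),
$$
which establishes (\ref{dimbound}).

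For equality on $\mathbb{H}P^{n}$, I would recall that $\mathbb{H}P^{n}$ has constant quaternionic sectional curvature, so its Riemann tensor is exactly the model expression appearing in (\ref{rg}); consequently $W^{Q}=0$. The second part of Theorem \ref{main2} then gives that $\mathcal{D}$ is flat, and since $\mathbb{H}P^{n}$ is simply connected the holonomy of $\mathcal{D}$ is trivial. Hence every element of a fiber $\mathcal{E}_{p}$ extends to a unique global $\mathcal{D}$-parallel section, so the above inequality becomes the equality $\mathrm{dim}\,\mathcal{C}_{2}(\mathbb{H}P^{n})=(n+1)(2n+3)$.

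The argument is essentially skeletal once Theorem \ref{main2} is in hand; the only mild obstacle I anticipate is keeping the bookkeeping between the real bundles ($Q$, $Q$-Hermitian $2$-forms, $TM$) and their complex avatars ($S^{2}H$, $S^{2}E$, $T_{\mathbb{C}}M$) consistent with the implicit identifications used throughout Section \ref{0}, so that the real rank $(n+1)(2n+3)$ comes out correctly.
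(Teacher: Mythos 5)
Your proof is correct. The inequality half is exactly the paper's argument: the rank of $S^{2}H\oplus S^{2}E\oplus TM$ (read as a real bundle, i.e.\ $Q$ plus the $Q$-Hermitian $2$-forms plus $TM$) is $3+n(2n+1)+4n=(n+1)(2n+3)$, and a $\mathcal D$-parallel section is determined by its value at a point; your rank bookkeeping checks out. Where you genuinely diverge is the equality statement on $\mathbb{H}P^{n}$. The paper does not use the flatness criterion at all here: it invokes the result of \cite{lianamax} that on $\mathbb{H}P^{n}$ every conformal-Killing $2$-form is compatible and $\delta$ is an isomorphism from ${\mathcal C}_{2}(\mathbb{H}P^{n})$ onto the space of Killing vector fields, whose dimension is $(n+1)(2n+3)=\dim\mathrm{sp}(n+1)$. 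You instead observe that $\mathbb{H}P^{n}$ has $W^{Q}=0$, so by the second part of Theorem \ref{main2} the connection $\mathcal D$ is flat, and simple connectedness then lets every fiber element integrate to a global parallel section. Your route is more self-contained --- it uses only the theorem just proved together with two standard facts about $\mathbb{H}P^{n}$ (vanishing of $W^{Q}$ and simple connectedness) and actually exploits the flatness statement of Theorem \ref{main2}, which the paper's proof of this corollary leaves idle; the paper's route, by contrast, gives the extra information that $\delta$ realizes the isomorphism with $\mathrm{isom}(\mathbb{H}P^{n})$ and that no non-compatible conformal-Killing $2$-forms exist there, at the cost of importing an external result.
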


\begin{proof}
Notice that $(n+1)(2n+3)$ is the rank of the bundle $S^{2}H\oplus
S^{2}E\oplus TM$ on which the connection $\mathcal D$ is defined.
Therefore, inequality (\ref{dimbound}) follows from Theorem
\ref{main2}. It remains to show that equality holds on the
quaternionic projective space $\mathbb{H}P^{n}$, with its standard
quaternionic-K\"{a}hler structure. As proved in \cite{lianamax},
any conformal-Killing $2$-form on $\mathbb{H}P^{n}$ is compatible
and the co-differential $\delta$ defines an isomorphism from
${\mathcal C}_{2}(\mathbb{H}P^{n})$ onto the space of Killing
vector fields. The latter has dimension $(n+1)(2n+3).$ Our claim
follows.
\end{proof}

\subsection{The case when $\nu\neq 0$}

Let $(M,g)$ be quaternionic K\"{a}hler manifold (as usual,
connected and of dimension $4n\geq 8$). In this section we assume
that the scalar curvature of $(M, g)$ is non-zero. Our main
results in this setting are Propositions \ref{sp} and \ref{hamkil}
(see below).

\begin{prop}\label{sp}
If $(M, g)$ has a non-parallel compatible conformal-Killing
$2$-form, then the holonomy group of $(M,g)$ is
$\mathrm{Sp}(1)\mathrm{Sp}(n).$
\end{prop}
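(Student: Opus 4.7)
The plan is to argue by contraposition: assuming the holonomy group of $(M,g)$ is strictly contained in $\mathrm{Sp}(1)\mathrm{Sp}(n)$, I will show that every compatible conformal-Killing $2$-form is parallel.

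The first step would be a Berger--Simons style holonomy reduction. Since $g$ is Einstein with $\nu\neq 0$, $(M,g)$ is irreducible by Theorem 14.45 of \cite{besse} and is not Ricci-flat, so it is not locally hyper-K\"ahler and its holonomy is not contained in $\mathrm{Sp}(n)$. For $4n\geq 8$, a direct inspection of Berger's list shows that $\mathrm{Sp}(n)$ is the only proper subgroup of $\mathrm{Sp}(1)\mathrm{Sp}(n)$ appearing on the list and acting irreducibly on $\mathbb{R}^{4n}$; once it is ruled out, Berger--Simons forces $(M,g)$ to be locally symmetric. In particular $\nabla R^{g}=0$, whence $\nabla W^{Q}=0$ by (\ref{rg}).

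Next I would feed an arbitrary compatible conformal-Killing $2$-form $\psi$ into Proposition \ref{1}. Writing $u=\psi^{S^{2}E}$ and $X=\delta\psi$, the first identity of (\ref{w}) becomes
$$
W^{Q}_{V,X}=\frac{4n-1}{4(n+2)}(\nabla_{V}W^{Q})(u)=0,\quad\forall V\in TM ,
$$
so that $i_{X}W^{Q}\equiv 0$ on $M$.

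The decisive step is then a parallel-transport argument applied to the subspace $K_{x}:=\{V\in T_{x}M : i_{V}W^{Q}=0\}\subset T_{x}M$. Because $W^{Q}$ is parallel, $K_{x}$ is preserved by parallel transport and hence is invariant under the holonomy representation at $x$; by irreducibility of $(M,g)$, the holonomy acts irreducibly on $T_{x}M$, so $K_{x}$ is either $\{0\}$ or the whole of $T_{x}M$. In the latter case $W^{Q}$ would vanish at $x$ and thus, by parallelism, on all of $M$; but then the curvature (\ref{rg}) would reduce to that of the standard $\mathbb{H}P^{n}$ model, so Ambrose--Singer would give the full holonomy $\mathrm{Sp}(1)\mathrm{Sp}(n)$, contradicting the assumption. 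Therefore $K_{x}=\{0\}$ at every point, which forces $X\equiv 0$. The reformulated conformal-Killing equation (\ref{echivalent}) then collapses to $\nabla\psi=0$, so $\psi$ is parallel, completing the contrapositive. The main obstacle is the holonomy bookkeeping in the first step; once it is clear that $\mathrm{Sp}(n)$ is the unique non-trivial Berger-list competitor inside $\mathrm{Sp}(1)\mathrm{Sp}(n)$ in dimension $4n\geq 8$, the rest is driven mechanically by the parallelism of $W^{Q}$ combined with the irreducibility of $(M,g)$.
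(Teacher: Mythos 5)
Your argument is correct, but it takes a genuinely different route from the paper's. Both proofs hinge on relation (\ref{w}) of Proposition \ref{1}, which ties $W^{Q}_{\cdot , X}$ (for $X=\delta\psi$) to the covariant derivative of the curvature evaluated on $u=\psi^{S^{2}E}$; the difference is in how that relation is exploited. The paper argues directly and constructively: since $(\nabla_{V}R^{g})_{Y,Z}$ always lies in $\mathrm{hol}(M,g)$, relation (\ref{w}) places $W^{Q}_{X,V}$ in the holonomy algebra, and then, using $\mathfrak{sp}(1)\subset\mathrm{hol}(M,g)$ and the decomposition (\ref{rg}), the elements $(Y\wedge U)^{S^{2}E}$ are generated one by one --- first for $Y$ or $U$ in $\mathrm{Span}\{X,J_{1}X,J_{2}X,J_{3}X\}$, then for arbitrary $Y,U$ via an explicit Lie bracket computation --- so that $\mathfrak{sp}(n)\subset\mathrm{hol}(M,g)$ is obtained by hand, with no classification theorem. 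You instead contrapose and invoke Berger--Simons: your dimension count correctly identifies $\mathrm{Sp}(n)$ as the only Berger-list group properly contained in $\mathrm{Sp}(1)\mathrm{Sp}(n)$ acting irreducibly in dimension $4n\geq 8$ (for $n=2$ one also checks that $\mathrm{Spin}(7)$, of dimension $21$, does not fit inside the $13$-dimensional $\mathrm{Sp}(1)\mathrm{Sp}(2)$), and it is excluded by $\nu\neq 0$; reduced holonomy therefore forces local symmetry, hence $\nabla W^{Q}=0$, hence $i_{X}W^{Q}=0$ by (\ref{w}), and the parallel holonomy-invariant distribution $K=\ker (V\mapsto i_{V}W^{Q})$ is $0$ or $TM$ by irreducibility, each alternative either killing $X$ (so $\psi$ is parallel by (\ref{echivalent})) or contradicting the reduced-holonomy hypothesis. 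Your route is conceptually shorter but leans on the deep Berger classification and uses the standing assumption $\nu\neq 0$ of this subsection twice (for irreducibility and to exclude $\mathrm{Sp}(n)$); the paper's route is elementary and self-contained modulo standard holonomy facts. A small bonus of your version is the intermediate conclusion that a quaternionic-K\"ahler manifold with $\nu\neq 0$ and reduced holonomy is locally symmetric with nowhere-vanishing $W^{Q}$, which meshes with the introduction's remark about open subsets of Wolf spaces.
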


\begin{proof}
Let $\psi$ be a non-parallel compatible conformal-Killing $2$-form
on $(M, g)$. We will show that the holonomy algebra
$\mathrm{hol}(M,g)$ of $(M, g)$ coincides with
$\mathrm{sp}(1)\oplus\mathrm{sp}(n).$ From Proposition
\ref{corolar}, $X:=\delta \psi$ is non-trivial. Recall that
$\mathrm{hol}(M, g)$ contains $(\nabla_{V}R^{g})_{Y, Z}$ for any
$Y, Z, V\in TM$ (see Chapter 10 of \cite{besse}). Therefore,
relation (\ref{w}) implies that $W^{Q}_{X, V}\in\mathrm{hol}(M,
g)$ for any $V$. Using this fact, the proof of our claim follows
like in Lemma 17 of \cite{lianamax}. For completeness of our
exposition, we include the argument. Since $R^{g}_{Y, Z}$ belongs
to the holonomy algebra as well, and $\mathrm{sp}(1)\subset
\mathrm{hol}(M,g)$ (since the scalar curvature is non-zero, see
Lemma 14.46 of \cite{besse}), we deduce that $(R^{g}_{Y,
Z})^{S^{2}E}$ belongs to $\mathrm{hol}(M, g)$, for any $Y, Z\in
TM.$ It follows that
\begin{equation}\label{a01}
(R^{g}_{X, V})^{S^{2}E} - W^{Q}_{X, V}= -\nu (X\land
V)^{S^{2}E}\in\mathrm{hol}(M,g),\quad\forall V
\end{equation}
and
\begin{equation}\label{a02}
(R^{g}_{J_{i}X, V})^{S^{2}E}- W^{Q}_{J_{i}X, V}= -\nu (J_{i}X\land
V)^{S^{2}E}\in\mathrm{hol}(M,g),\quad\forall V,
\end{equation}
where  $\{ J_{1}, J_{2}, J_{3}\}$ is a local admissible basis of $Q$.
We just proved that if $Y$ or $U$ belong to
${\mathcal V}:= \mathrm{Span}\{ X, J_{1}X, J_{2}X, J_{3}X\}$, then
$(Y\land U)^{S^{2}E}$ belongs to $\mathrm{hol}(M,g).$ It remains
to show that $(Y\land U)^{S^{2}E}$ belongs to the holonomy algebra
when both $Y$ and $U$ are orthogonal to ${\mathcal V} .$ Take
such two tangent vectors $Y$ and $U$. Since both
$(X\land Y)^{S^{2}E}$ and $(X\land U)^{S^{2}E}$ belong to
$\mathrm{hol}(M,g)$, also their Lie bracket, which is equal to
$$
[(X\land Y)^{S^{2}E}, (X\land U)^{S^{2}E}] =
\frac{1}{16}\sum_{i,j=1}^{3} g(J_{i}Y, J_{j}U)J_{i}X\land J_{j}X
+\frac{1}{4} g(X, X) (Y\land U)^{S^{2}E},
$$
belongs to $\mathrm{hol}(M,g)$, as well as the $S^{2}E$-part of
this Lie bracket. Using (\ref{a01}) and (\ref{a02}) we get that
$(Y\wedge U)^{S^{2}E}\in\mathrm{hol}(M,g).$ Our claim follows.
\end{proof}

\begin{cor}\label{adaugat} One of the following two statements holds:\\

i) either any compatible conformal-Killing $2$-form on $(M, g)$ is parallel;\\

ii) or any parallel $2$-form is trivial. In particular, the
codifferential
$$
\delta : {\mathcal C}_{2}(M)\rightarrow \mathrm{isom}(M, g)
$$
is injective.
\end{cor}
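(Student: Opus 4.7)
The plan is to argue by a clean dichotomy driven by Proposition \ref{sp}. Suppose statement (i) fails, i.e.\ there exists a non-parallel compatible conformal-Killing $2$-form on $(M,g)$. Since the scalar curvature is non-zero, Proposition \ref{sp} forces the holonomy group of $(M,g)$ to be the full $\mathrm{Sp}(1)\mathrm{Sp}(n)$. I then want to conclude, from the structure of the $\mathrm{Sp}(1)\mathrm{Sp}(n)$-representation on $\Lambda^{2}TM$, that the only parallel $2$-form on $M$ is the zero section, which is precisely statement (ii).

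For this representation-theoretic step I invoke the decomposition (\ref{lambda2}), which is not only a bundle decomposition but an $\mathrm{Sp}(1)\mathrm{Sp}(n)$-irreducible one: the summand $S^{2}H$ is the adjoint representation of $\mathrm{sp}(1)$, the summand $S^{2}E$ is the adjoint representation of $\mathrm{sp}(n)$, and $S^{2}H\otimes\Lambda^{2}_{0}E$ is the tensor product of two non-trivial irreducibles. None of these three irreducible factors contains the trivial representation, so the only $\mathrm{Sp}(1)\mathrm{Sp}(n)$-invariant vector in $\Lambda^{2}(\mathbb{R}^{4n})$ is zero. Via the holonomy principle, a parallel $2$-form on $M$ corresponds to such an invariant vector, hence must vanish.

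It remains to deduce that $\delta:\mathcal{C}_{2}(M)\to\mathrm{isom}(M,g)$ is injective under (ii). If $\psi\in\mathcal{C}_{2}(M)$ satisfies $\delta\psi=0$, I set $X=\delta\psi=0$ in the characterization (\ref{echivalent}) of Proposition \ref{corolar}: the right-hand side vanishes identically, so $\nabla_{Y}\psi=0$ for every $Y$, i.e.\ $\psi$ is parallel. Invoking (ii) gives $\psi=0$, so $\ker\delta=\{0\}$ on $\mathcal{C}_{2}(M)$.

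I do not expect any real obstacle here: the only non-trivial input beyond the already-proved Proposition \ref{sp} is the standard fact that $\mathrm{Sp}(1)\mathrm{Sp}(n)$ has no invariant $2$-form, which is read off the irreducible decomposition (\ref{lambda2}). The injectivity of $\delta$ is then a one-line consequence of Proposition \ref{corolar} applied with $X=0$.
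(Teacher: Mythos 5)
Your proof is correct and follows the paper's argument essentially verbatim: Proposition \ref{sp} gives full holonomy $\mathrm{Sp}(1)\mathrm{Sp}(n)$ when (i) fails, hence no non-trivial parallel $2$-forms, and injectivity of $\delta$ follows by putting $X=\delta\psi=0$ in (\ref{echivalent}). The only (harmless) difference is that you spell out the representation-theoretic reason why full holonomy excludes parallel $2$-forms via the decomposition (\ref{lambda2}), whereas the paper simply cites Besse.
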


\begin{proof}
Suppose that $(M, g)$ admits a non-parallel compatible
conformal-Killing $2$-form. Then, from Proposition \ref{sp}, the
holonomy group of $(M, g)$ is $\mathrm{Sp}(1)\mathrm{Sp}(n)$ and
therefore $(M, g)$ does not admit non-trivial parallel $2$-forms
(see e.g. \cite{besse}, page 306). From Proposition \ref{corolar},
any compatible Killing $2$-form is parallel. Thus, the
codifferential $\delta$ defined on ${\mathcal C}_{2}(M)$ is
injective.

\end{proof}

\begin{prop}\label{hamkil}
The map
\begin{equation}\label{map}
{\mathcal C}_{2}(M)\ni \psi \rightarrow u:= \psi^{S^{2}E}
\end{equation}
is an isomorphism from the vector space ${\mathcal C}_{2}(M)$ of
compatible conformal-Killing $2$-forms on $(M, g)$ to the vector
space of (real) sections of $S^{2}E$ which satisfy
\begin{equation}\label{hamiltonian}
\nabla_{Y}u = \frac{1}{4n-1}\left( X\wedge Y
+\sum_{i=1}^{3}J_{i}X\wedge J_{i}Y\right) \quad\forall Y\in TM,
\end{equation}
where $X\in {\mathcal X}(M)$ is a vector field on $M$ (necessarily
equal to $\frac{4n-1}{4n+2}\delta u$) and  $\{ J_{1}, J_{2},
J_{3}\}$ is an admissible basis of $Q$. The inverse is the map
\begin{equation}\label{inverse}
u \rightarrow \psi := u -\frac{1}{(2n+1)\nu} (\nabla \delta u
)^{S^{2}H}.
\end{equation}

\end{prop}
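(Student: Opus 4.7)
The plan is to show the map $\psi\mapsto u:=\psi^{S^2E}$ is well-defined into the stated image, injective, and surjective with inverse (\ref{inverse}).

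\textbf{Forward direction and injectivity.} For any $\psi\in\mathcal C_2(M)$, Proposition~\ref{corolar} gives (\ref{echivalent}). The key observation is that $X\wedge Y+\sum_{i=1}^{3}J_iX\wedge J_iY$ already lies in $S^2E$: since $(X\wedge Y)(J_i\cdot,J_i\cdot)=J_iX\wedge J_iY$, the projector $(\cdot)^{S^2E}=\frac{1}{4}\bigl(\cdot+\sum_i(\cdot)(J_i\cdot,J_i\cdot)\bigr)$ yields $X\wedge Y+\sum_i J_iX\wedge J_iY=4(X\wedge Y)^{S^2E}$, while $\omega_i\in S^2H$. Projecting (\ref{echivalent}) onto $S^2E$ therefore produces exactly (\ref{hamiltonian}) for $u:=\psi^{S^2E}$ and $X:=\delta\psi$, and (\ref{codiferentiale}) gives $X=\frac{4n-1}{4n+2}\delta u$. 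If in addition $\psi^{S^2E}=0$, then $\delta u=0$ forces $X=0$, so (\ref{twistore}) reduces to $\nabla\psi^{S^2H}=0$; thus $\psi^{S^2H}\in\mathrm{Ker}\,\bar D$ with vanishing codifferential, and since $\nu\neq 0$ the codifferential is the inverse of (\ref{bijectiva}), so $\psi^{S^2H}=0$ and injectivity is proved.

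\textbf{Surjectivity via the inverse formula.} Given $u$ satisfying (\ref{hamiltonian}), set $X:=\frac{4n-1}{4n+2}\delta u$. Assuming for the moment that $X$ is Killing, the isomorphism (\ref{bijectiva}) produces $\sigma:=-\frac{2}{(4n-1)\nu}(\nabla X)^{S^2H}\in\mathrm{Ker}\,\bar D$ with $\delta\sigma=-\frac{3}{4n-1}X$; substituting $X=\frac{4n-1}{2(2n+1)}\delta u$ rewrites $\sigma=-\frac{1}{(2n+1)\nu}(\nabla\delta u)^{S^2H}$, and I define $\psi:=u+\sigma$. To verify (\ref{echivalent}) for this $\psi$, note that its $S^2E$-projection is (\ref{hamiltonian}) by hypothesis; its $S^2H$-projection is the twistor equation (\ref{twistore}) for $\sigma$, which by the identification between $\bar D=0$ and (\ref{twistore}) recorded in Section~\ref{sectiunepen} is equivalent to $\sigma\in\mathrm{Ker}\,\bar D$ with the prescribed codifferential; and there is no $S^2H\otimes\Lambda^2_0E$ contribution in $\nabla_Y\psi$ because $S^2H\oplus S^2E$ is a $\nabla$-parallel sub-bundle. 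Proposition~\ref{corolar} then gives $\psi\in\mathcal C_2(M)$ with $\psi^{S^2E}=u$ and $\delta\psi=X$, establishing surjectivity and the inverse formula (\ref{inverse}).

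\textbf{Main obstacle.} The only substantive point is proving that $X=\frac{4n-1}{4n+2}\delta u$ is Killing. Its divergence vanishes because $\delta^2=0$, so by the criterion recalled in Section~\ref{adaugat} it suffices to show $X$ is quaternionic, i.e.\ $[\nabla X,Q]\subset Q$. I plan to differentiate (\ref{hamiltonian}) once more, skew-symmetrize in the two covariant derivatives, and invoke the Ricci identity $[\nabla_Z,\nabla_Y]u-\nabla_{[Z,Y]}u=[R^g_{Z,Y},u]$: expanding the left-hand side via (\ref{hamiltonian}) produces contributions involving $\nabla X$ and $\nabla J_i\in\Omega^1\otimes Q$, while the right-hand side decomposes via (\ref{rg}); tracing against a local orthonormal frame, in the spirit of the derivation of (\ref{sumaadaug})--(\ref{final}) in Proposition~\ref{p1} but starting from the $S^2E$-projected conformal-Killing equation rather than the full one, should extract an identity exhibiting $\nabla X$ as a section of $S^2H\oplus S^2E\subset\mathrm{End}(TM)$. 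This simultaneously yields the skew-symmetry (hence the Killing property) and the quaternionic property of $X$.
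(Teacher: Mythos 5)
Your forward direction, your injectivity argument via the twistor isomorphism (\ref{bijectiva}), and your reconstruction of $\psi=u+\sigma$ from $u$ all match the paper's proof and are correct. The one substantive step --- and you correctly isolate it as the main obstacle --- is showing that $X=\frac{4n-1}{4n+2}\delta u$ is Killing, and here your plan has a genuine gap. You propose to trace the skew-symmetrized second covariant derivative of (\ref{hamiltonian}) ``in the spirit of (\ref{sumaadaug})--(\ref{final})'' and thereby ``exhibit $\nabla X$ as a section of $S^{2}H\oplus S^{2}E$.'' But the computation (\ref{sumaadaug})--(\ref{final}) in Proposition \ref{p1} \emph{presupposes} that $\nabla X$ is a section of $S^{2}H\oplus S^{2}E$; there this is supplied beforehand by the Weitzenb\"{o}ck argument, which uses that $X=\delta\psi$ is Killing because $g$ is Einstein --- information not yet available for a bare solution $u$ of (\ref{hamiltonian}). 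Tracing the Ricci identity for $u$ yields a single vector-valued relation among $\nabla_{Z}X$, $\sum_{i}J_{i}\nabla_{J_{i}Z}X$, traces of $\nabla X$, and a curvature contraction; by itself it forces neither the skew-symmetry of $\nabla X$ nor its membership in the holonomy algebra, so the conclusion does not follow from tracing alone.

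The missing idea is the paper's relation (\ref{doi}): since $u$ is a section of $S^{2}E$ and $R^{g}_{Z,Y}$ takes values in $\mathrm{sp}(1)\oplus\mathrm{sp}(n)$ with $[S^{2}H,S^{2}E]=0$, one has $[R^{g}_{Z,Y},u]=[(R^{g}_{Z,Y})^{S^{2}E},u]=[R^{g}_{J_{1}Z,J_{1}Y},u]$. Hence the right-hand side of (\ref{curv-u}) is invariant under $(Z,Y)\mapsto (J_{1}Z,J_{1}Y)$; writing out this invariance in terms of $S(Z,X):=[\nabla X,J_{1}](Z)$ and \emph{then} tracing gives (\ref{rel-quat}), i.e. $[\nabla X,J_{1}]\in\Gamma (Q)$, so $X$ is quaternionic. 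Being quaternionic and divergence-free with $\nu\neq 0$, $X$ is Killing by the criterion recalled in Section \ref{0}. Once you insert this step your argument closes; the remaining verification that $\psi:=u-\frac{2}{\nu (4n-1)}(\nabla X)^{S^{2}H}$ solves (\ref{echivalent}) is handled cleanly (arguably more cleanly than in the paper) by your appeal to (\ref{bijectiva}) and (\ref{twistore}).
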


\begin{proof} Let $\psi$
be a compatible conformal-Killing $2$-form and $u:= \psi^{S^{2}E}$
its $S^{2}E$-component. Projecting (\ref{echivalent}) onto
$S^{2}E$ we obtain (\ref{hamiltonian}). Moreover, since $\nu\neq
0$,
$$
\psi^{S^{2}H} = -\frac{1}{(2n+1)\nu} (\nabla \delta u )^{S^{2}H}
$$
because both sides are solutions of the twistor equation, with
equal codifferentials (recall relation (\ref{codiferentiale}) and
our comments from Section \ref{sectiunepen}). It follows that
\begin{equation}\label{rec}
\psi = u -\frac{1}{(2n+1)\nu} (\nabla \delta u)^{S^{2}H}.
\end{equation}
It remains to show that the map (\ref{map}) is onto the space of
solutions of (\ref{hamiltonian}), i.e. any section $u$ of
$S^{2}E$, which is a solution of (\ref{hamiltonian}), is the
$S^{2}E$-part of a compatible conformal-Killing $2$-form. For
this, let $u$ be a solution of (\ref{hamiltonian}), where $X$ is a
vector field (necessarily equal to $\frac{4n-1}{2(2n+1)}\delta
u$). We will show that $X$ is Killing, or, equivalently, $X$ is
quaternionic (being divergence-free and $\nu\neq 0$, see Section
\ref{sectiunepen}). Taking the covariant derivative of
(\ref{hamiltonian}) with respect to $Z$ and skew symmetrizing in
$Y$ and $Z$ we obtain
\begin{equation}\label{curv-u}
[R^{g}_{Z, Y}, u] = \frac{4}{4n-1}\left( \nabla_{Z}X\wedge Y -
\nabla_{Y}X\wedge Z\right)^{S^{2}E}
\end{equation}
 On the other hand, since $u$ is a section of $S^{2}E$,
\begin{equation}\label{doi}
[R^{g}_{Z, Y}, u] = [(R^{g}_{Z, Y})^{S^{2}E}, u] = [R^{g}_{J_{1}Z,
J_{1}Y}, u]
\end{equation}
so the right hand side of (\ref{curv-u}) remains unchanged if we
replace $(Z, Y)$ by $(J_{1}Z, J_{1}Y).$ Defining
$$
S(Z, X):= [\nabla X, J_{1}](Z)= \nabla_{J_{1}Z}X -
J_{1}\nabla_{Z}X,\quad\forall Z\in {\mathcal X}(M)
$$
we get:
\begin{align*}
S(Z, X)\wedge J_{1}Y - J_{1}S(Z, X)\wedge Y - J_{2}S(Z, X)\wedge
J_{3}Y +
J_{3}S(Z, X)\wedge J_{2}Y\\
- S(Y, X)\wedge J_{1}Z + J_{1}S(Y, X)\wedge Z + J_{2}S(Y, X)\wedge
J_{3}Z
-J_{3}S(Y, X)\wedge J_{2}Z=0.\\
\end{align*}
Applying this relation to a vector $U$ and taking the trace over
$Y$ and $U$ we obtain
\begin{equation}\label{rel-quat}
S(Z, X) = \frac{1}{4n}\sum_{k=1}^{4n} \left(g(S(e_{k},
X),J_{2}e_{k})J_{2}Z + g(S(e_{k}, X),J_{3} e_{k}) J_{3}Z\right)
,\quad\forall Z\in TM,
\end{equation}
where $\{ e_{k}\}$ is a local orthonormal frame of $TM$. Relation
(\ref{rel-quat}) implies that $X$ is quaternionic. Being
quaternionic and divergence-free (and $\nu\neq 0$), $X$ is
Killing. Using the Koszul formula for $X$ and (\ref{hamiltonian}),
it can be checked that
$$
\psi := u -\frac{2}{\nu (4n-1)} (\nabla X)^{S^{2}H}
$$
satisfies (\ref{echivalent}) and hence is conformal-Killing.
Obviously, $\psi$ is a compatible $2$-form and its $S^{2}E$-part
coincides with $u$. Our claim follows.

\end{proof}

\section{A bracket on conformal-Killing $2$-forms}\label{bracket-section}

In this final Section we define a skew-symmetric multiplication on
the space of conformal-Killing $2$-forms on a
quaternionic-K\"{a}hler manifold and we study its properties in
relation with the subspace of compatible conformal-Killing
$2$-forms. Such a multiplication can be defined in the more
general setting of Einstein manifolds, as follows.

\begin{prop}\label{general} If $\psi_{1}$ and $\psi_{2}$ are
conformal-Killing $2$-forms on an Einstein manifold, then
\begin{equation}\label{111}
[\psi_{1}, \psi_{2}] := \frac{1}{2} \left(
L_{\delta\psi_{1}}\psi_{2} - L_{\delta\psi_{2}}\psi_{1}\right)
\end{equation}
is also conformal-Killing and

\begin{equation}\label{co-dif}
\delta [\psi_{1}, \psi_{2}] =
[\delta\psi_{1}, \delta\psi_{2}].
\end{equation}
\end{prop}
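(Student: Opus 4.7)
The plan is to exploit the fact that on an Einstein manifold the codifferential of a conformal-Killing $2$-form is a Killing vector field, so the Lie derivatives in the definition of $[\psi_1,\psi_2]$ are along infinitesimal isometries. Concretely, write $X_i := \delta\psi_i$ for $i=1,2$. Since $g$ is Einstein, the result of Semmelmann recalled in the introduction gives that each $X_i$ is Killing. The advantage is that the Lie derivative along a Killing field commutes with every operator built solely from the metric.

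First I would record the consequences of $X_i$ being Killing: $L_{X_i}g = 0$ implies $L_{X_i}\nabla = 0$ (as a tensor on $TM\otimes TM$), and hence for any $p$-form $\phi$ one has $\nabla(L_{X_i}\phi) = L_{X_i}(\nabla\phi)$, where the right-hand side means the natural Lie derivative of the $(1,p)$-tensor $\nabla\phi$. In addition $L_{X_i}$ commutes with $d$ (always) and with the Hodge star and therefore with $\delta$ (because $X_i$ preserves $g$). It follows that $L_{X_i}$ preserves the $O(m)$-decomposition (\ref{tm}) pointwise, and so the conformal-Killing equation (\ref{conformalkil}) for $\psi_j$ transports verbatim to an equation for $L_{X_i}\psi_j$: if $\psi_j$ lies in $\ker\mathcal{P}$ for the conformal-Killing operator $\mathcal{P}$, then so does $L_{X_i}\psi_j$.

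From this the first claim is immediate: both $L_{X_1}\psi_2$ and $L_{X_2}\psi_1$ are conformal-Killing $2$-forms, hence so is their skew-symmetric combination $[\psi_1,\psi_2]$. For the codifferential identity (\ref{co-dif}) I would simply compute
\begin{equation*}
\delta[\psi_1,\psi_2] = \tfrac{1}{2}\bigl(\delta L_{X_1}\psi_2 - \delta L_{X_2}\psi_1\bigr) = \tfrac{1}{2}\bigl(L_{X_1}\delta\psi_2 - L_{X_2}\delta\psi_1\bigr) = \tfrac{1}{2}\bigl(L_{X_1}X_2 - L_{X_2}X_1\bigr),
\end{equation*}
using $[\delta,L_{X_i}]=0$ for Killing $X_i$, and since $L_{X_i}X_j = [X_i,X_j]$ on vector fields this equals $[X_1,X_2] = [\delta\psi_1,\delta\psi_2]$.

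The only genuinely delicate point is the commutation $L_X\circ\mathcal{P} = \mathcal{P}\circ L_X$ on forms, i.e.\ that $L_X$ preserves the conformal-Killing condition when $X$ is Killing. I would handle this either by invoking the general principle that any natural, metric-defined differential operator is $L_X$-equivariant for Killing $X$, or, if a direct verification is preferred, by applying $L_X$ to (\ref{conformalkil}) term by term using $L_X\nabla_Y = \nabla_Y L_X + \nabla_{[X,Y]}$, $[L_X,d]=0$, and $[L_X,\delta]=0$, and observing that the inhomogeneous $\nabla_{[X,Y]}\psi$ contribution is absorbed correctly on both sides. This is the only step requiring more than bookkeeping, but it is standard.
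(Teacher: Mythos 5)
Your proposal is correct and follows exactly the paper's argument: both rest on the fact that on an Einstein manifold $\delta\psi_i$ is a Killing field (Semmelmann), that the Lie derivative along a Killing field preserves the conformal-Killing condition, and that $\delta L_{X}\psi = L_{X}\delta\psi$ gives the codifferential identity. The paper states these facts without detail, so your write-up is simply a more explicit version of the same proof.
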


\begin{proof} It is easy to check that the Lie
derivative of a conformal-Killing form $\psi$ (of any degree) with
respect to a Killing vector field $X$ is also conformal-Killing,
with codifferential $L_{X}(\delta \psi ).$  Since on an Einstein
manifold the codifferential of a conformal-Killing $2$-form is a
Killing vector field (see \cite{sem}), the bracket $[\psi_{1},
\psi_{2}]$ of two conformal-Killing $2$-forms $\psi_{1}$ and
$\psi_{2}$, as defined in (\ref{111}), is also conformal-Killing.
Relation (\ref{co-dif}) is straightforward.
\end{proof}

Since any quaternionic-K\"{a}hler manifold is Einstein,
Proposition \ref{general} implies that (\ref{111}) is a
skew-symmetric multiplication on the space of conformal-Killing
$2$-forms on any quaternionic-K\"{a}hler manifold.

\begin{cor}\label{particular} i) The bracket (\ref{111}) preserves the subspace ${\mathcal
C}_{2}(M)$ of compatible conformal-Killings $2$-forms on a
quaternionic-K\"{a}hler manifold $(M,g).$\\

ii) Assume that $(M,g)$ is not Ricci-flat. Then $({\mathcal
C}_{2}(M), [\cdot ,\cdot ])$ is a Lie algebra and the
codifferential
\begin{equation}\label{codi}
\delta : {\mathcal C}_{2}(M)\rightarrow \mathrm{isom}(M, g)
\end{equation}
is a Lie algebra homomorphism.

\end{cor}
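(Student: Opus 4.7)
I would prove part (i) via two claims.

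Claim A: for every $\psi\in\mathcal{C}_2(M)$, the vector field $\delta\psi$ is quaternionic and Killing. The Killing property holds because $g$ is Einstein. For the quaternionic property I invoke the Weitzenb\"{o}ck argument from the proof of Proposition \ref{p1}: projecting the identity (\ref{weiz}) onto the irreducible component $S^2H\otimes\Lambda^2_0(E)$ shows $\nabla(\delta\psi)\in\Gamma(S^2H\oplus S^2E)$ for every value of $\nu$, which for a Killing vector field is equivalent to the infinitesimal quaternionic condition $[\nabla(\delta\psi),Q]\subset Q$ (since $S^2H\oplus S^2E$ is the intersection of $\Lambda^2(T_{\mathbb{C}}M)$ with the normalizer of $Q$ inside $\mathfrak{so}(TM)$).

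Claim B: for any quaternionic Killing vector field $X$, $L_X$ preserves each summand of the decomposition $\Lambda^2(T_{\mathbb{C}}M)=S^2H\oplus S^2E\oplus S^2H\otimes\Lambda^2_0 E$. This follows because the flow of $X$ preserves both $g$ and $Q$: a direct computation using $L_X g=0$ and $L_X J_i\in\Gamma(Q)$ yields $L_X\omega_i\in\mathrm{span}\{\omega_1,\omega_2,\omega_3\}$ (preservation of $S^2H$), and $Q$-Hermiticity transforms covariantly under pullback by the flow (preservation of $S^2E$). Combining Claims A and B with Proposition \ref{general}, each of $L_{\delta\psi_1}\psi_2$ and $L_{\delta\psi_2}\psi_1$ is both conformal-Killing and compatible, so their half-difference $[\psi_1,\psi_2]$ lies in $\mathcal{C}_2(M)$, proving part (i).

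For part (ii), assume $\nu\neq 0$; skew-symmetry of $[\cdot,\cdot]$ is immediate from (\ref{111}). By Corollary \ref{adaugat} the codifferential $\delta:\mathcal{C}_2(M)\to\mathrm{isom}(M,g)$ is injective, and by Proposition \ref{general} it intertwines the bracket (\ref{111}) with the usual Lie bracket of vector fields. Given $\psi_1,\psi_2,\psi_3\in\mathcal{C}_2(M)$, the Jacobiator for (\ref{111}) lies in $\mathcal{C}_2(M)$ by part (i); applying $\delta$ to it yields the Jacobiator for the Lie bracket of $\delta\psi_1,\delta\psi_2,\delta\psi_3$, which vanishes. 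Injectivity of $\delta$ then forces the original Jacobiator to vanish, so $(\mathcal{C}_2(M),[\cdot,\cdot])$ is a Lie algebra and $\delta$ is a Lie algebra homomorphism.

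The main obstacle is Claim A --- the fact that $\delta\psi$ is quaternionic --- which relies crucially on the Weitzenb\"{o}ck identity from Proposition \ref{p1}. Claim B is a symmetry argument and part (ii) is purely formal once one knows $\delta$ is injective and intertwines the brackets.
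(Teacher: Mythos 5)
Your part (i) is correct and is essentially the paper's argument in slightly different clothing: the paper also reduces everything to the fact, extracted from the Weitzenb\"ock identity (\ref{weiz}) in the proof of Proposition \ref{p1}, that $\nabla(\delta\psi)$ is a section of $S^{2}H\oplus S^{2}E$; it then writes $L_{X_{1}}\psi_{2}=\nabla_{X_{1}}\psi_{2}-[\nabla X_{1},\psi_{2}]$ and uses that $S^{2}H\oplus S^{2}E\cong\mathfrak{sp}(1)\oplus\mathfrak{sp}(n)$ is closed under the commutator and preserved by $\nabla$, whereas you pass through the equivalent statement that $\delta\psi$ is a quaternionic Killing field and argue via its flow. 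Both routes rest on the same key input and both are sound. Part (ii) as written, however, contains a genuine gap: you assert that Corollary \ref{adaugat} gives injectivity of $\delta$ on ${\mathcal C}_{2}(M)$, but that corollary is a dichotomy, and the injectivity conclusion is only asserted under its alternative ii) (no nontrivial parallel $2$-forms). Under alternative i) every compatible conformal-Killing $2$-form is parallel; then $\delta$ vanishes identically on ${\mathcal C}_{2}(M)$ and is injective only if ${\mathcal C}_{2}(M)$ is trivial, which is not guaranteed. So the step ``apply $\delta$ to the Jacobiator and use injectivity'' does not cover all cases.

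The repair is one line, and it is exactly the case analysis the paper performs: in the alternative where all compatible conformal-Killing $2$-forms are parallel, every $\delta\psi_{i}$ vanishes, hence the bracket (\ref{111}) is identically zero on ${\mathcal C}_{2}(M)$ and the Jacobi identity holds trivially; in the other alternative your injectivity argument applies verbatim. (The paper phrases this by observing that the Jacobiator is a compatible Killing $2$-form, hence parallel by Proposition \ref{corolar}, and then distinguishing the two alternatives of Corollary \ref{adaugat}.) With that case distinction added, your proof of (ii) is complete; that $\delta$ is a Lie algebra homomorphism is indeed immediate from (\ref{co-dif}).
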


\begin{proof} Let $\psi_{1}$ and $\psi_{2}$ be two
compatible conformal-Killing $2$-forms, with codifferentials
$X_{1}$ and $X_{2}$, which are Killing vector fields. From
Proposition \ref{general} we know that $[\psi_{1}, \psi_{2}]$ is
conformal-Killing and we need to show that it is a section of
$S^{2}H\oplus S^{2}E.$ As already mentioned before, $\nabla X_{i}$
($1\leq i\leq 2$) are sections of $S^{2}H\oplus S^{2}E$. It
follows that
$$
L_{X_{1}}\psi_{2} = \nabla_{X_{1}}\psi_{2} - [\nabla X_{1},
\psi_{2} ]
$$
is a section of $S^{2}H\oplus S^{2}E$. A similar argument shows
that $L_{X_{2}}\psi_{1}$ is also a section of $S^{2}H\oplus
S^{2}E$. Thus $[\psi_{1}, \psi_{2}]$ is a compatible
conformal-Killing $2$-form. This proves the first claim. For the
second claim, let $\psi_{1}, \psi_{2}, \psi_{3}\in {\mathcal
C}_{2}(M)$ and define
$$
\psi := [[\psi_{1}, \psi_{2}], \psi_{3}] +[[\psi_{3}, \psi_{1}],
\psi_{2}] + [[\psi_{2}, \psi_{3}], \psi_{1}].
$$
From Proposition \ref{general}, $\psi$ is a Killing $2$-form.
Being compatible, it is parallel. Recall now, from Corollary
\ref{adaugat}, that on a quaternionic-K\"{a}hler manifold with
non-zero scalar curvature, either any parallel $2$-form is trivial
or any compatible conformal-Killing $2$-form is parallel. In both
cases, $\psi =0$ (note that the bracket $[\psi_{i}, \psi_{j}]$ is
zero when both $\psi_{i}$ and $\psi_{j}$ are parallel). Thus
$({\mathcal C}_{2}(M), [\cdot , \cdot ])$ is a Lie algebra. From
Proposition \ref{general}, the map (\ref{codi}) is a Lie algebra
homomorphism.

\end{proof}

\textbf{Acknowledgements:} I am grateful to Vestislav Apostolov
for suggesting me to study compatible conformal-Killing $2$-forms
on quaternionic K\"{a}hler manifolds. I also thank Paul Gauduchon
for pertinent remarks on a first version of this paper. Useful
discussions with Jose Figueroa-O'Farrill about a Lie algebra
structure on the space of conformal-Killing $2$-forms are also
acknowledged. This work is supported by a CNCSIS grant IDEI
``Structuri geometrice pe varietati diferentiable'' code
1187/2008.

\end{document}